\theoremstyle{plain}
\newtheorem{prop}{Proposition}[section]
\newtheorem{coro}[prop]{Corollary}
\newtheorem{lemm}[prop]{Lemma}
\newtheorem{theoalph}{Theorem}
\theoremstyle{definition}
\theoremstyle{remark}
\newtheorem{rema}[prop]{Remark}
\newtheoremstyle{citing}
  {3pt}
  {3pt}
  {\itshape}
  {}
  {\bfseries}
  {.}
  {.5em}
  {\thmnote{#3}}
\theoremstyle{citing}
\newtheorem*{generic}{}
\newcommand{\partn}[1]{{\smallskip \noindent \textbf{#1.}}}
\numberwithin{equation}{section}       
\newcommand{\C}{\mathbb{C}}
\newcommand{\R}{\mathbb{R}}
\newcommand{\cW}{\mathcal{W}}
\newcommand{\cX}{\mathcal{X}}
\newcommand{\sA}{\mathscr{A}}
\newcommand{\sF}{\mathscr{F}}
\newcommand{\sG}{\mathscr{G}}
\newcommand{\sK}{\mathscr{K}}
\newcommand{\sL}{\mathscr{L}}
\newcommand{\sM}{\mathscr{M}}
\newcommand{\sN}{\mathscr{N}}
\newcommand{\sU}{\mathscr{U}}
\newcommand{\sX}{\mathscr{X}}
\newcommand{\sY}{\mathscr{Y}}
\newcommand{\hpsi}{\widehat{\psi}}
\newcommand{\teta}{\widetilde{\teta}}
\newcommand{\tsigma}{\widetilde{\tsigma}}
\newcommand{\tvarsigma}{\widetilde{\tvarsigma}}
\newcommand{\la}{\lambda}
\newcommand{\ov}{\overline}
\renewcommand{\=}{ : = }
\DeclareMathOperator{\diam}{diam}
\DeclareMathOperator{\Jac}{Jac}
\DeclareMathOperator{\dist}{dist}
\DeclareMathOperator{\Crit}{Crit}
\newcommand{\CC}{\overline{\C}}
\newcommand{\RR}{\overline{\R}}
\newcommand{\map}{T}
\newcommand{\hsL}{\widehat{\sL}}
\newcommand{\one}{\pmb{1}}
\newcommand{\distancia}{d}
\newcommand{\mapa}{T}
\newcommand{\ExpShrink}{ESC}
\newcommand{\TCEC}{TCE condition}
\DeclareMathOperator{\ES}{ESC}
\DeclareMathOperator{\Per}{Per}
\newcommand{\smjulia}{\widetilde{\sM}(J(\map))}
\newcommand{\mjulia}{\sM(J(\map))}
\newcommand{\imjulia}{\sM(J(\map), \map)}
\newcommand{\average}{W}
\newcommand{\pressure}{P}
\newcommand{\spc}{X} 
\newcommand{\smspc}{\widetilde{\sM}(\spc)} 
\newcommand{\mspc}{\sM(\spc)} 
\newcommand{\imspc}{\sM(\spc, \mapa)} 
\newcommand{\rpot}{\varphi} 
\newcommand{\gpot}{\psi} 
\newcommand{\hgpot}{\hpsi} 
\newcommand{\rrate}{I^{\rpot}} 
\newcommand{\ldpm}{\Omega} 
\newcommand{\ldpmb}{\Sigma} 
\begin{document}

\title[Large deviation principles for TCE rational maps]{Large deviation principles for non-uniformly hyperbolic rational maps}
\author[H. Comman]{Henri Comman$^\dag$}
\author[J. Rivera-Letelier]{Juan Rivera-Letelier$^\ddag$}
\thanks{$\dag$ Partially supported by FONDECYT grant 1070045. Gratefully acknowledges Universidad Cat{\'o}lica del Norte for hospitality.}
\thanks{$\ddag$ Partially supported by Research Network on Low Dimensional Systems, PBCT/CONICYT, Chile. Gratefully acknowledges Universidad de Santiago de Chile for hospitality.}
\address{$\dag$ Henri Comman, Institute  of Mathematics, Pontifical Catholic University of Valparaiso, Chile}
\email{henri.comman@ucv.cl}
\address{$\ddag$ Juan Rivera-Letelier, Facultad de Matem{\'a}ticas, Campus San Joaqu{\'\i}n, P. Universidad Cat{\'o}lica de Chile, Avenida Vicu{\~n}a Mackenna~4860, Santiago, Chile}
\email{riveraletelier@mat.puc.cl}

\subjclass[2000]{Primary: 37D35; Secondary:  37A50, 37D25, 60F10}

\begin{abstract}
We show some level-2 large deviation principles for rational maps satisfying a strong form of non-uniform hyperbolicity, called ``Topological Collet-Eckmann''.
More precisely,  we prove a  large deviation principle for the distribution of iterated preimages, periodic points, and Birkhoff averages.
For this purpose we show that  each H{\"o}lder continuous potential admits a unique equilibrium state, and that the pressure function can be characterized in terms of iterated preimages, periodic points, and Birkhoff averages.
Then we use a variant of a general result of Kifer.
\end{abstract}
\keywords{Large deviation principle, thermodynamic formalism, rational map, non-uniform hyperbolicity, Topological Collet-Eckmann condition}

\maketitle

\section{Introduction}

This paper is devoted to the study of (level-2) large deviation principles for complex rational maps of degree at least two, viewed as dynamical systems acting on the Riemann sphere.
Our results apply to rational maps satisfying a strong form of non-uniform hyperbolicity condition, called ``Topological Collet-Eckmann'' (TCE).
Although the \TCEC{} is very strong, the set of rational maps that satisfy it, but that are not uniformly hyperbolic, has positive Lebesgue measure in the space of rational maps of a given degree~\cite{Aspthesis}; see also~\cite{Ree86,GraSwi00,Smi00,DujFav08} for related results.
The \TCEC{} is also interesting because it can be formulated in several equivalent ways~\cite{PrzRivSmi03}.

The first key observation is that for a rational map satisfying the \TCEC{} every H\"older continuous potential has a unique equilibrium state.
This allows us to apply (a variant of) a general result of Kifer~\cite[Theorem~3.4]{Kif90} to obtain level\nobreakdash-2 large deviation principles for sequences of measures associated to periodic points, iterated preimages, and Birkhoff averages.

We now proceed to describe our results in more detail.

\subsection{Equilibrium states for TCE rational maps}\label{ss:equilibria}
Let~$\map$ be a complex rational map  of degree at least two, viewed as a dynamical system acting on the Riemann sphere~$\CC$.
We denote by~$J(\map)$ its Julia set and by $\imjulia$ the space of invariant probability measures supported by ~$J(\map)$, endowed with the weak$^*$ topology.
For each $\mu \in \imjulia$ we denote by~$h_\mu(\map)$ the \textit{measure-theoretic entropy of~$\mu$}.
Given a H\"older continuous function~$\rpot : ~J(\map) \to \R$, a probability measure~$\mu_0 \in \imjulia$ is called an ~\textit{equilibrium state of~$\map$ for the
potential~$\rpot$}, if the supremum
\begin{equation}\label{e:variational pressure}
\pressure(\map, \rpot)
\=\sup \left\{ h_\mu(T) + \int \rpot d\mu : \mu \in \imjulia \right\},
\end{equation}
is attained at $\mu = \mu_0$.

The \TCEC{} was originally formulated in topological terms.
It is equivalent to the following strong form of Pesin's non-uniform hyperbolicity condition: There is a constant $\chi > 0$ such that for each~$\mu \in \imjulia$ the \textit{Lyapunov exponent $\int \log |\map'| d\mu$ of~$\mu$} is greater than or equal to~$\chi$.
See~\cite{PrzRivSmi03} for the original formulation of the \TCEC, and  several others  equivalent formulations.
For other results concerning equilibrium states of rational maps see~\cite{MakSmi03,PrzRiv0806,StrUrb03} and references therein.

The following result is fundamental in what follows.
\begin{theoalph}\label{t:equilibria}
Let~$\map$ be a rational map satisfying the \TCEC.
Then for every H\"older continuous function $\rpot : J(\map) \to \R$ there is a unique equilibrium state of~$\map$ for the potential~$\rpot$.
\end{theoalph}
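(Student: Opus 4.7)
The plan is to decouple existence from uniqueness. Existence should come from soft arguments: the space $\imjulia$ is weak-$*$ compact, the map $\mu \mapsto \int \rpot\,d\mu$ is continuous, and for a rational map of degree at least two the metric entropy map $\mu \mapsto h_{\mu}(\map)$ is upper semi-continuous on $\imjulia$ (a classical fact). Hence the functional $\mu \mapsto h_{\mu}(\map) + \int \rpot\,d\mu$ attains its supremum in~\eqref{e:variational pressure}, and at least one equilibrium state exists.

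For uniqueness I would build an equilibrium state explicitly and then argue that every other one must coincide with it. First, via a Patterson--Sullivan-type construction, produce a conformal probability measure $\nu$ on $J(\map)$ for the potential $\rpot - \pressure(\map,\rpot)$, namely one satisfying $d(\map^{*}\nu)/d\nu = \exp(\pressure(\map,\rpot) - \rpot)$ along local inverse branches. Using the \TCEC{} in the equivalent ``exponential shrinking of components'' form from~\cite{PrzRivSmi03}, argue that $\nu$ does not charge the post-critical set. Next, build an invariant probability measure $\mu \ll \nu$ by extracting a weak-$*$ limit of Ces\`aro averages of push-forwards of $\nu$, and use the resulting density $d\mu/d\nu$ together with a Rokhlin-style formula to verify $h_{\mu}(\map) + \int \rpot\,d\mu = \pressure(\map,\rpot)$. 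For uniqueness, let $\mu'$ be any ergodic equilibrium state. The TCE lower bound $\int \log|\map'|\,d\mu' \geq \chi > 0$ ensures that $\mu'$ has enough hyperbolicity to apply a one-dimensional Ledrappier-type argument, forcing $\mu' \ll \nu$; ergodicity of $\mu$ then yields $\mu' = \mu$. The non-ergodic case reduces via ergodic decomposition, each ergodic component being itself an equilibrium state by the affine structure of the variational problem.

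The main technical obstacle is controlling the dynamics near $\Crit(\map)$, where conformality fails and the density of $\mu$ with respect to $\nu$ may blow up. Fortunately this is exactly where the TCE hypothesis does its work: in its several equivalent forms from~\cite{PrzRivSmi03}, it provides simultaneously a uniform positive Lyapunov exponent on $\imjulia$ and exponential contraction of the relevant inverse branches, which is precisely the input needed to extend the thermodynamic formalism developed in~\cite{PrzRiv0806} from potentials of sufficiently small oscillation to arbitrary H\"older continuous potentials.
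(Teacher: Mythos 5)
Your skeleton---a conformal measure, an invariant density, Rokhlin's formula, and a Ledrappier-type absolute continuity statement for ergodic equilibrium states with positive Lyapunov exponent---is essentially the route the paper takes (its main proof delegates the last ingredient to \cite[Theorem~8]{Dob0804} after checking positivity of all Lyapunov exponents via \cite{PrzRivSmi03}, and Appendix~\ref{s:Ruelle-Perron-Frobenius} carries out the Ruelle--Perron--Frobenius construction you sketch). But your final deduction of uniqueness does not go through as stated: from $\mu'\ll\nu$ and ergodicity of the constructed $\mu$ you cannot conclude $\mu'=\mu$. Two mutually singular ergodic invariant measures can both be absolutely continuous with respect to the (non-invariant) conformal measure $\nu$; to transfer $\mu'\ll\nu$ into $\mu'\ll\mu$ you need in addition $\nu\ll\mu$, i.e.\ that the density $d\mu/d\nu$ is $\nu$-a.e.\ bounded away from zero, or else the Ledrappier-type step must yield equivalence $\mu'\sim\nu$ rather than mere absolute continuity. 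Moreover the ergodicity of your constructed $\mu$ is not established at that stage (in the paper it is only known a posteriori, as a consequence of uniqueness). The appendix avoids this circle by a different closing argument: Rokhlin's formula plus a Jensen-type inequality show that an ergodic invariant $\rho$ is an equilibrium state if and only if the measure $h_0^{-1}\rho$ is $\exp(\pressure(\map,\rpot)-\rpot)$-conformal, and uniqueness then follows from the \emph{uniqueness of the conformal measure} (part~3 of the proof of Theorem~\ref{t:Ruelle-Perron-Frobenius}).

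A second point you pass over is the identification of the conformal exponent with $\exp(\pressure(\map,\rpot))$, and, relatedly, why a weak$^*$ limit of Ces\`aro averages of push-forwards of $\nu$ remains absolutely continuous with respect to $\nu$: these averages have densities $\tfrac{1}{n}\sum_{k=0}^{n-1}\lambda^{-k}\sL_\rpot^k\one$, and without uniform two-sided bounds a weak$^*$ limit of absolutely continuous measures need not be absolutely continuous. Both points are settled in the paper by the key TCE-based estimate of Lemma~\ref{l:tree pressure}, namely $C_0^{-1}\le \exp(-n\pressure(\map,\rpot))\,\sL_\rpot^n\one\le C_0$, which forces any eigenvalue of $\sL_\rpot^*$ to equal $\exp(\pressure(\map,\rpot))$ (so the soft Schauder fixed point of Lemma~\ref{l:conformal measures as engienvectors} suffices, with no need for a Patterson--Sullivan construction whose exponent would still have to be matched to the pressure) and which gives an invariant density bounded above \emph{and below}---the very lower bound that repairs your uniqueness step. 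So the plan is workable, but these two items are the actual substance of the proof rather than routine details, and as written your uniqueness argument has a genuine gap.
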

We obtain this theorem as a simple consequence of~\cite[Theorem~8]{Dob0804}.
In Appendix~\ref{s:Ruelle-Perron-Frobenius} we give a reasonably self contained proof of this result, as a consequence of a Ruelle-Perron-Frobenius type theorem (Theorem~\ref{t:Ruelle-Perron-Frobenius}).
When the potential~$\rpot$ satisfies $\sup_{J(\map)} \rpot < \pressure(\map, \rpot)$, these results were shown for a general rational map~$\map$ in~\cite{DenUrb91b,Prz90,DenPrzUrb96}.
The fact that Theorem~\ref{t:equilibria} holds for \textit{every} H\"older continuous potential is  crucial to obtain the large deviation principles that we proceed to describe.

\subsection{Level-2 large deviations principles for TCE rational maps}\label{ss:LDP TCE}
Let~$\mjulia$ be the space of Borel probability measures on~$J(\map)$ endowed with the weak$^*$ topology, and let  $I : \sM(J(\map)) \to [0, + \infty]$ be a lower semi-continuous function. Recall that a sequence~$(\ldpm_n)_{n \ge 1}$ of Borel probability measures on~$\sM(J(\map))$ is said to satisfy a \textit{large deviation principle with rate function~$I$}, if for every closed subset $\sF$ of $\sM(J(\map))$ we have
\[
\limsup_{n \to + \infty} \frac{1}{n} \log \ldpm_n (\sF)
\le - \inf_{\sF} I,
\]
and if for every open subset~$\sG$ of~$\sM(J(\map))$ we have,
\[
 \liminf_{n \to + \infty} \frac{1}{n} \log \ldpm_n (\sG)
\ge - \inf_{\sG} I.
\]
The function~$I$ is uniquely characterized by this property, see~\S\ref{s:preliminaries} for background and further properties.

\begin{theoalph}\label{t:LDP TCE}
Let~$\map$ be a rational map satisfying the \TCEC, let~$\rpot : J(\map) \to \R$ be a H\"older continuous function, and let~$\mu_{\rpot}$ be the unique equilibrium state of~$\map$ for the potential~$\rpot$.
For each integer~$n \ge 1$ let $\average_n : J(\map) \to \mjulia$ be the continuous function defined by
$$ \average_n(x) \= \tfrac{1}{n} \left( \delta_x + \delta_{\map(x)} + \cdots + \delta_{\map^{n - 1}(x)} \right), $$
and let~$S_n(\rpot) : J(\map) \to \R$ be defined by
$$ S_n(\rpot)(x) \= n \int \rpot d\average_n(x) = \rpot(x) + \rpot \circ \map(x) + \cdots + \rpot \circ \map^{n - 1}(x). $$
Given an integer~$n \ge 1$ consider the following Borel probability measures on~$\mjulia$.
\begin{description}
\item[Periodic points]
Letting~$\Per_n \= \{ p \in J(\map) \mid \map^n(p) = p \}$, put
$$ \ldpm_n \= \sum_{p \in \Per_n} \frac{\exp(S_n(\rpot)(p))}{\sum_{p' \in \Per_n} \exp(S_n(\rpot)(p'))} \delta_{\average_n(p)}. $$
\item[Iterated preimages]
Given $x_0 \in J(\map)$, put
$$ \ldpm_n(x_0) \= \sum_{x \in \map^{- n}(x_0)} \frac{\exp(S_n(\rpot)(x))}{\sum_{y \in \map^{-n}(x_0)} \exp(S_n(\rpot)(y))} \delta_{\average_n(x)}. $$
\item[Birkhoff averages]
$ \ldpmb_n \= \average_n[\mu_\rpot]$ (\textit{i.e.}, the image measure of~$\mu_\varphi$ by~$\average_n$).
\end{description}
Then each of the sequences $( \ldpm_n )_{n \ge 1}$, $( \ldpm_n(x_0) )_{n \ge 1}$ and $(\ldpmb_n)_{n \ge 1}$ converges to~$\delta_{\mu_{\rpot}}$ in the weak$^*$ topology, and satisfies a large deviation principle in~$\mjulia$ with rate function $\rrate : \mjulia \to [0,+\infty]$ given by
\begin{equation}\label{t:LDP TCE-eq1}
\rrate(\mu)
=
\begin{cases}
\pressure (\mapa, \rpot) - \int \rpot d\mu - h_{\mu}(\mapa) & \text{if } \mu\in\imjulia;
\\
+\infty & \text{if } \mu \in \mjulia \setminus \imjulia.
\end{cases}
\end{equation}
Furthermore, for each convex open subset~$\sG$ of~$\mjulia$ containing some invariant measure  we have
$\inf_{\sG} \rrate=\inf_{\overline{\sG}} \rrate$, and
\begin{multline}\label{t:LDP TCE-eq2}
\lim_{n \to + \infty} \frac{1}{n} \log \ldpm_n(\sG)
=
\lim_{n \to + \infty} \frac{1}{n} \log \ldpm_n(x_0)(\sG)
=
\lim_{n \to + \infty} \frac{1}{n} \log \ldpmb_n(\sG)=\inf_{\sG} \rrate,
\end{multline}
and the above expression remains true replacing  $\sG$ by $\overline{\sG}$.
\end{theoalph}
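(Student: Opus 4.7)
The plan is to verify the hypotheses of a variant of Kifer's general large deviation theorem~\cite{Kif90}, which yields a unified LDP for the push-forward $\average_n[\zeta_n]$ of any sequence $(\zeta_n)$ of Borel probability measures on $J(\mapa)$ under two assumptions: (i) the equilibrium state of $\rpot+\psi$ is unique for every H\"older continuous $\psi$, and (ii) the exponential growth rate of the partition functions $\int \exp(S_n \psi)\, d\zeta_n$ equals $\pressure(\mapa, \rpot+\psi)-\pressure(\mapa, \rpot)$. Assumption~(i) is immediately supplied by Theorem~\ref{t:equilibria} applied to the H\"older potential $\rpot+\psi$, so the heart of the proof is assumption~(ii), which for the three sequences of the theorem translates into the identities
\begin{align*}
\tfrac{1}{n} \log \sum_{p \in \Per_n} \exp(S_n(\rpot+\psi)(p)) & \longrightarrow \pressure(\mapa, \rpot+\psi), \\
\tfrac{1}{n} \log \sum_{y \in \mapa^{-n}(x_0)} \exp(S_n(\rpot+\psi)(y)) & \longrightarrow \pressure(\mapa, \rpot+\psi), \\
\tfrac{1}{n} \log \int \exp(S_n \psi)\, d\mu_\rpot & \longrightarrow \pressure(\mapa, \rpot+\psi)-\pressure(\mapa, \rpot),
\end{align*}
holding for every H\"older continuous $\psi$.

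Each identity should be derived from the Ruelle-Perron-Frobenius theorem announced in Appendix~\ref{s:Ruelle-Perron-Frobenius}, applied to the transfer operator $\sL_{\rpot+\psi}$, yielding the uniform asymptotic $\sL_{\rpot+\psi}^n \one \asymp \exp(n\pressure(\mapa, \rpot+\psi))$. The preimage identity is then immediate from the formula $\sL_{\rpot+\psi}^n \one(x_0) = \sum_{y \in \mapa^{-n}(x_0)} \exp(S_n(\rpot+\psi)(y))$. The Birkhoff identity reduces to the same asymptotic after expressing $\mu_\rpot$ in terms of the conformal measure and invariant density produced by RPF for $\rpot$. The periodic-point identity is the main technical obstacle: for TCE maps one must control periodic orbits approaching the critical set, where the RPF argument loses uniformity; here I would exploit the uniform positivity of Lyapunov exponents supplied by TCE (\cite{PrzRivSmi03}) to bound the contribution of such orbits, and then recover the leading term via a trace-formula or Bowen-type shadowing argument.

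With these inputs in place, Kifer's theorem gives the LDP upper bound with rate function $\rrate(\mu) = \sup_\psi \{\int \psi\, d\mu - \pressure(\mapa, \rpot+\psi) + \pressure(\mapa, \rpot)\}$, the supremum being over H\"older continuous~$\psi$. A duality argument identifies this with~\eqref{t:LDP TCE-eq1}: for $\mu \in \imjulia$, the variational principle~\eqref{e:variational pressure} and the uniqueness in Theorem~\ref{t:equilibria} combine to give $\rrate(\mu) = \pressure(\mapa, \rpot) - h_\mu(\mapa) - \int \rpot\, d\mu$; for $\mu \notin \imjulia$, the test potentials $\psi = t(g\circ\mapa - g)$ with $\int (g\circ\mapa - g)\, d\mu \neq 0$ and $t\to\pm\infty$ make the supremum infinite. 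The matching LDP lower bound in Kifer's framework follows from assumption~(i): as $\psi$ ranges over H\"older functions, the equilibrium states $\mu_{\rpot+\psi}$ sweep out a weak$^*$-dense subset of $\{\mu \in \imjulia : \rrate(\mu) < \infty\}$, and near each of them the corresponding perturbed sequences concentrate.

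Weak$^*$ convergence $\ldpm_n, \ldpm_n(x_0), \ldpmb_n \to \delta_{\mu_\rpot}$ is now automatic, since $\rrate$ has a unique zero at $\mu_\rpot$ by Theorem~\ref{t:equilibria}. For the final statement, one first observes that $\rrate$ is convex on $\mjulia$: it is affine on the convex set $\imjulia$ (since $h_\mu(\mapa)$ is affine on invariant measures and $\mu \mapsto \int \rpot\, d\mu$ is affine), and equal to $+\infty$ outside. If $\sG$ is convex, open, and contains some $\mu_0 \in \imjulia$, then for any $\mu^* \in \overline{\sG}$ the segment $\mu_t := (1-t)\mu^* + t\mu_0$ lies in $\sG$ for every $t \in (0,1]$, and convexity gives $\rrate(\mu_t) \leq (1-t)\rrate(\mu^*) + t\rrate(\mu_0) \to \rrate(\mu^*)$ as $t \to 0^+$, hence $\inf_\sG \rrate = \inf_{\overline{\sG}} \rrate$. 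Combined with the upper and lower LDP inequalities this forces the equalities in~\eqref{t:LDP TCE-eq2} for both $\sG$ and $\overline{\sG}$.
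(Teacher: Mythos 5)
Your overall strategy coincides with the paper's: combine Theorem~\ref{t:equilibria} with the abstract Kifer-type result (Theorem~\ref{t:LDP}, applied with $\cW$ the space of H\"older continuous functions), so that everything reduces to the three partition-function identities, which the paper establishes as Lemmas~\ref{l:tree pressure}, \ref{l:periodic points} and~\ref{l:Birkhoff averages}. Your treatment of the preimage and Birkhoff identities, of the identification of the rate function, and of the convexity argument giving $\inf_{\sG}\rrate=\inf_{\overline{\sG}}\rrate$ all match the paper's proof (for the Birkhoff step note that you implicitly need $d\mu_\rpot/d\eta_\rpot$ bounded away from $0$ and $+\infty$, which is exactly what the Appendix Ruelle--Perron--Frobenius theorem supplies).

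The genuine gap is the periodic-point identity, which you correctly flag as the main obstacle but then only gesture at via ``uniform positivity of Lyapunov exponents'' plus ``a trace-formula or Bowen-type shadowing argument''. Neither ingredient does the job: positivity of the Lyapunov exponents of invariant measures (equivalently, of all periodic orbits) gives by itself no two-sided comparison between $\sum_{p\in\Per_n}\exp(S_n(\rpot+\gpot)(p))$ and $\sL_{\rpot+\gpot}^n\one$, and shadowing/specification-type arguments are precisely what is not available for TCE maps with critical points in $J(\map)$. What actually closes this step in the paper (Lemma~\ref{l:periodic points}) is the \ExpShrink{} reformulation of the \TCEC: every connected component~$W$ of $\map^{-n}(B(x,r_0))$ meeting $B(x,r_0/3)$ has diameter at most $\lambda_{\ES}^{-n}$, hence is compactly contained in $B(x,2r_0/3)\subset B(x,r_0)$; a topological degree argument then shows that~$W$ contains exactly as many fixed points of the relevant iterate as preimages of~$x$ (counted with multiplicity), and the same exponential shrinking yields the uniform H\"older distortion bound $|S_n(\rpot+\gpot)(y)-S_n(\rpot+\gpot)(y')|\le C$ for $y,y'$ in the same component. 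Running this over a finite $(r_0/3)$-dense set of centers gives the upper bound, and pulling back once more by a fixed iterate~$m_0$ with $J(\map)\subset\map^{m_0}(B(x_0,r_0/3))$ gives the lower bound; both sides are then compared with $\sL_{\rpot+\gpot}^n\one$ via Lemma~\ref{l:tree pressure}. Without an argument of this kind, your verification of the functional equality~\eqref{e:functional equality} for the measures $\ldpm_n$ built from periodic points is incomplete.
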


In order to illustrate Theorem~\ref{t:LDP TCE} we state a couple of corollaries.
\begin{coro}\label{c:contracted LDP}
Let  $\gpot : J(\map) \to \R$ be a continuous function, and let  $\widehat{\psi}:\mjulia \to \R$ be defined by
$\widehat{\psi}(\mu)=\int\psi d\mu$.
With the notations of Theorem~\ref{t:LDP TCE}, each of the sequences of image measures $(\widehat{\psi}[\Omega_n])_{n \ge 1}$,  $(\widehat{\psi}[\Omega_n(x_0)])_{n \ge 1}$, $(\widehat{\psi}[\ldpmb_n])_{n \ge 1}$ satisfies a large deviation principle in $\mathbb{R}$ with rate function
\[x\mapsto\inf\left\{ I^\varphi(\mu): \mu\in\mjulia, \int\psi d\mu=x \right\}.\]
Furthermore, when $\psi$ is normalized so that $\int \gpot d\mu_{\rpot} = 0$, for each $\varepsilon > 0$ small enough we have
\begin{equation*}
\begin{split}
& \quad \lim_{n \to + \infty} \frac{1}{n} \log \left( \frac{\sum_{{p \in \Per_n, \tfrac{1}{n} |S_n(\gpot)(p)|  > \varepsilon}} \exp(S_n(\rpot)(p))}{\sum_{p' \in \Per_n} \exp(S_n(\rpot)(p'))}
 \right)
\\ & =
\lim_{n \to + \infty} \frac{1}{n} \log \left( \frac{\sum_{{x \in \map^{-n}(x_0), \tfrac{1}{n} |S_n(\gpot)(x)|  > \varepsilon}} \exp(S_n(\rpot)(x))}{\sum_{y \in \map^{-n}(x_0)} \exp(S_n(\rpot)(y))}
 \right)
\\ & =
\lim_{n \to + \infty} \frac{1}{n} \log \mu_{\rpot} \left\{ x \in J(\map) : \tfrac{1}{n} |S_n(\gpot)(x)|  > \varepsilon \right\}
\end{split}
\end{equation*}
\begin{equation}\label{c:contracted LDP-eq2}
=
-\inf \left\{ \pressure(\map, \rpot) - \int \rpot d\mu - h_{\mu}(\map): \mu \in \imjulia,  \left| \int \gpot d\mu \right|  > \varepsilon \right\},
\end{equation}
and the above limits are strictly negative (possibly infinite).
\end{coro}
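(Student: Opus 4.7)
The plan is to combine the contraction principle with the ``Furthermore'' clause of Theorem~\ref{t:LDP TCE}.

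For the first (qualitative) assertion, I would observe that $\mjulia$ is compact, so the lower semicontinuous rate function $\rrate$ is automatically good, and that $\hgpot : \mjulia \to \R$ is weak$^*$ continuous since $\gpot$ is. The contraction principle applied to each of the three LDPs of Theorem~\ref{t:LDP TCE} immediately yields LDPs on $\R$ for $(\hgpot[\ldpm_n])$, $(\hgpot[\ldpm_n(x_0)])$ and $(\hgpot[\ldpmb_n])$ with the contracted rate function $x \mapsto \inf\{\rrate(\mu) : \mu \in \mjulia, \int \gpot \, d\mu = x\}$; this coincides with the expression in the statement because $\rrate \equiv + \infty$ outside $\imjulia$.

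To obtain \eqref{c:contracted LDP-eq2}, I would decompose the open set $A_\varepsilon := \{\mu \in \mjulia : |\int \gpot\, d\mu| > \varepsilon\}$ as the disjoint union $A_\varepsilon^+ \sqcup A_\varepsilon^-$, where $A_\varepsilon^\pm := \{\mu \in \mjulia : \pm \int \gpot\, d\mu > \varepsilon\}$ is convex and open. The continuous affine functional $\hgpot$ sends the compact convex set $\imjulia$ onto a compact interval $[a,b] \ni 0 = \hgpot(\mu_\rpot)$; for $\varepsilon > 0$ smaller than every positive element of $\{b, -a\}$, each $A_\varepsilon^\pm$ either contains an invariant measure or satisfies $\overline{A_\varepsilon^\pm} \cap \imjulia = \emptyset$. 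In the former case the ``Furthermore'' clause of Theorem~\ref{t:LDP TCE} applies directly and gives $\lim_n \tfrac{1}{n} \log \ldpm_n(A_\varepsilon^\pm) = - \inf_{A_\varepsilon^\pm} \rrate = - \inf_{\overline{A_\varepsilon^\pm}} \rrate$ (with the analogous identities for $(\ldpm_n(x_0))$ and $(\ldpmb_n)$); in the latter, $\rrate \equiv + \infty$ on $\overline{A_\varepsilon^\pm}$ and the LDP upper bound applied to this closed set forces the corresponding limit to be $-\infty$. Writing $\ldpm_n(A_\varepsilon) = \ldpm_n(A_\varepsilon^+) + \ldpm_n(A_\varepsilon^-)$ and using the elementary fact that $\tfrac{1}{n}\log(u_n + v_n) \to \max(\alpha, \beta)$ whenever $\tfrac{1}{n}\log u_n \to \alpha$ and $\tfrac{1}{n}\log v_n \to \beta$ in $[-\infty, +\infty)$ then yields \eqref{c:contracted LDP-eq2}.

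For the strict negativity, Theorem~\ref{t:equilibria} identifies $\mu_\rpot$ as the unique zero of $\rrate$. Since $\hgpot(\mu_\rpot) = 0 < \varepsilon$, the measure $\mu_\rpot$ lies outside the closed set $\overline{A_\varepsilon}$, so lower semicontinuity of $\rrate$ together with compactness of $\overline{A_\varepsilon}$ force $\inf_{\overline{A_\varepsilon}} \rrate > 0$ (attained when finite, and equal to $+\infty$ when $\overline{A_\varepsilon}$ is disjoint from $\imjulia$). The only genuinely delicate point is the case distinction above needed to certify that the open-set and closed-set infima of $\rrate$ over the two convex pieces $A_\varepsilon^\pm$ agree; once that is in hand, everything reduces to direct appeals to Theorem~\ref{t:LDP TCE} and the contraction principle.
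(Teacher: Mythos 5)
Your proposal is correct and follows essentially the same route as the paper: the first assertion via the contraction principle, and~\eqref{c:contracted LDP-eq2} by splitting $\{\mu : |\int \gpot\, d\mu| > \varepsilon\}$ into the two convex open half-spaces, applying the ``Furthermore'' clause of Theorem~\ref{t:LDP TCE} to a piece containing an invariant measure and the large deviation upper bound when the closure of a piece misses $\imjulia$, with strict negativity coming from $\mu_{\rpot}$ lying outside the closure. Your per-piece dichotomy, with ``small enough'' quantified by the range of $\mu \mapsto \int \gpot\, d\mu$ on $\imjulia$ and the explicit $\tfrac{1}{n}\log(u_n + v_n) \to \max(\alpha,\beta)$ step, is if anything slightly more careful than the paper's global dichotomy at borderline values of $\varepsilon$.
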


\begin{coro}\label{c:entropy expression}
With the notations of Theorem~\ref{t:LDP TCE}, for each $\mu \in \imjulia$ and each convex local basis~$\sG_\mu$ at~$\mu$, we have
\begin{equation*}
\begin{split}
h_\mu(\map) & + \int \rpot d\mu \\
& =
\inf \left\{
\lim_{n \to + \infty} \frac{1}{n}\log\sum_{p \in \Per_n, \average_n(p)\in\sG} \exp(S_n(\rpot)(p)
: \sG\in\sG_\mu \right\},
\\ & =
\inf \left\{
\lim_{n \to + \infty} \frac{1}{n}\log\sum_{x \in \map^{-n}(x_0), \average_n(x)\in\sG} \exp(S_n(\rpot)(x)
: \sG\in\sG_\mu \right\},
\\ & =
P(\map,\varphi) + \inf \left\{
\lim_{n \to + \infty} \frac{1}{n}\log\mu_\varphi\{x\in J(\map):W_n(x)\in\sG\}
: \sG\in\sG_\mu \right\}.
\end{split}
\end{equation*}
\end{coro}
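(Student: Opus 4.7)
The plan is to deduce the corollary directly from the ``Furthermore'' clause of Theorem~\ref{t:LDP TCE}, applied to each $\sG$ in the convex local basis~$\sG_\mu$, together with the lower semicontinuity of the rate function~$\rrate$.

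First I would fix $\sG \in \sG_\mu$ and observe that since $\mu \in \imjulia$ belongs to~$\sG$, the set $\sG$ is a convex open subset of $\mjulia$ containing an invariant measure; so Theorem~\ref{t:LDP TCE} yields
\[
\lim_{n \to + \infty} \frac{1}{n} \log \ldpm_n(\sG)
=
\lim_{n \to + \infty} \frac{1}{n} \log \ldpm_n(x_0)(\sG)
=
\lim_{n \to + \infty} \frac{1}{n} \log \ldpmb_n(\sG)
=
-\inf_\sG \rrate.
\]
For the Birkhoff-averages statement this is exactly the limit appearing in the corollary, since $\ldpmb_n(\sG) = \mu_\rpot\{x \in J(\map) : \average_n(x) \in \sG\}$. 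For the periodic-points and iterated-preimages statements I would undo the normalizations defining $\ldpm_n$ and $\ldpm_n(x_0)$, writing
\[
\tfrac{1}{n}\log\!\!\!\!\sum_{p \in \Per_n,\, \average_n(p) \in \sG}\!\!\!\! \exp(S_n(\rpot)(p))
=
\tfrac{1}{n}\log\sum_{p' \in \Per_n} \exp(S_n(\rpot)(p'))
+
\tfrac{1}{n} \log \ldpm_n(\sG),
\]
and then invoke the characterization of $\pressure(\map, \rpot)$ as the exponential growth rate of these partition functions (established elsewhere in the paper); an analogous identity handles the preimages.

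Finally I would pass to the infimum over $\sG_\mu$. Each of the three quantities displayed in the corollary then equals $\pressure(\map, \rpot) - \sup_{\sG \in \sG_\mu}\inf_\sG \rrate$ (the $\pressure$-term being absent in the Birkhoff formulation before it is added back). Since $\rrate$ is lower semicontinuous and $\sG_\mu$ is a neighborhood basis at~$\mu$, a standard argument gives $\sup_{\sG \in \sG_\mu}\inf_\sG \rrate = \rrate(\mu)$: the inequality ``$\le$'' is immediate from $\mu \in \sG$, and the inequality ``$\ge$'' uses that for every $c < \rrate(\mu)$ the set $\{\rrate > c\}$ is an open neighborhood of~$\mu$ and therefore contains some $\sG \in \sG_\mu$. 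Substituting the formula for $\rrate(\mu)$ from~(\ref{t:LDP TCE-eq1}) then collapses the expression to $h_\mu(\map) + \int \rpot\, d\mu$.

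The only step that is not a direct consequence of Theorem~\ref{t:LDP TCE} is the identification of $\pressure(\map, \rpot)$ with the exponential growth rate of $\sum_{p \in \Per_n} \exp(S_n(\rpot)(p))$ and of $\sum_{x \in \map^{-n}(x_0)} \exp(S_n(\rpot)(x))$; this is the main obstacle, but it is precisely the ``characterization of the pressure in terms of periodic points and iterated preimages'' announced in the introduction, and I would simply cite the corresponding statements from the body of the paper.
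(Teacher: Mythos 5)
Your argument is correct and is essentially the paper's own proof: the paper likewise obtains the corollary by combining the limits in~\eqref{t:LDP TCE-eq2} for each convex open $\sG\in\sG_\mu$ with the pressure characterizations of Lemma~\ref{l:periodic points} (periodic points) and Lemma~\ref{l:tree pressure} (iterated preimages), and then using the rate function formula~\eqref{t:LDP TCE-eq1}. Your explicit verification that $\sup_{\sG\in\sG_\mu}\inf_{\sG}\rrate=\rrate(\mu)$ via lower semicontinuity just spells out what the paper leaves implicit (cf.~\eqref{prelim-LDP-eq2}), so there is nothing genuinely different here.
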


Theorem~\ref{t:LDP TCE} was obtained recently by the first named author in the case where~$\map$ is uniformly hyperbolic~\cite[Theorem~2]{Com09}.\footnote{Taking~$\map$ uniformly hyperbolic in Theorem~\ref{t:LDP TCE} does not permit to recover all the cases treated in~\cite[Theorem 2]{Com09}; this comes from the fact that in this last paper, the potential~$\varphi$ need not have a unique equilibrium state, as it is required in Theorem \ref{t:LDP TCE} (see~\cite[Example 4.1]{Com09}).}
For the same class of maps, the case of Birkhoff averages and $\varphi=0$ was obtained earlier by Lopes~\cite{Lop90}, and the upper-bounds in the case of periodic points were proved by Pollicott and Sridharan in~\cite{PolSri07}.

The Birkhoff averages case of Theorem~\ref{t:LDP TCE} was obtained by Grigull when~$\map$ is a parabolic rational map and when the potential~$\rpot$ satisfies $\sup_{\CC} \rpot < P(\map, \rpot)$~\cite[Theorem~1]{Grithesis}. See also the survey paper of Denker \cite{Den96}.

The large deviation upper bounds in the case of iterated preimages have been proved  by Pollicott and Sharp in~\cite{PolSha96} for an arbitrary rational map~$\map$, when the potential~$\rpot$ satisfies $\sup_{J(\map)} \rpot < \pressure(\map, \rpot)$.
An alternative proof of this result can be obtained using a general result on upper bounds, see~\cite[Remark~2 and Theorem~4]{Com09} and \cite[Theorem~4.5.3]{DemZei98}. See also~\cite{PolShaYur98} for the upper bounds in the case of interval maps with indifferent periodic points.

Using the contraction principle it is possible to derive from~Theorem~\ref{t:LDP TCE} a level\nobreakdash-1 large deviation principle in~$\R$ for each continuous potential, as in Corollary~\ref{c:contracted LDP}\footnote{See also the inducing scheme approach (of level-1 large deviations)  given recently by Melbourne and Nicol~\cite{MelNic08}, and Rey-Bellet and Young~\cite{ReyYou08}.}.
However, this simple trick does not work  with the geometric potential $- \log |\map'|$ by the lack of continuity of  the evaluation map $\mu\mapsto\int\log |\map'|d\mu$ when there is a critical point in the Julia set.
The techniques needed in order to get (even partial) level\nobreakdash-1 large deviations with the potential~$- \log |\map'|$ are different from those used here, and we shall not tackle them in this paper.
We refer here to results where large deviation bounds are proved only for some subsets of the real line, like for example those obtained by Keller
and Nowicki~\cite[Theorem~1.2 and Theorem~1.3]{KelNow92} in the case of unimodal maps satisfying the Collet-Eckmann condition, or~\cite[Corollary~B.4]{PrzRiv0806} and \cite{FuXia07} in the case of rational maps\footnote{See~\cite{Dinsib0810} for a weak form of upper bounds in the higher dimensional setting.}.

\subsection{Abstract result on level\nobreakdash-2 large deviations principles}
\label{ss:general LDP}
Theorem~\ref{t:LDP TCE} is obtained as a particular case of the following
variant of  Kifer's result~\cite[Theorem~3.4]{Kif90}.
See Appendix~\ref{s:flows} for an extension to more general dynamical systems and nets in place of sequences.
\begin{theoalph}\label{t:LDP}
Let~$\spc$ be a compact metrizable topological space, and let~$\mapa : \spc \to \spc$ be a continuous map  such that the measure-theoretic entropy of~$\mapa$, as a function defined on~$\imspc$, is finite and upper semi-continuous.
Fix $\rpot \in C(X)$, and let ~$\cW$ be a dense vector subspace of $C(X)$ such that for each $\gpot \in \cW$ there is a unique equilibrium state of~$\mapa$ for the potential~$\varphi+\psi$.
 Let $\rrate : \mspc \to [0, + \infty]$ be the function defined by
$$
\rrate(\mu)
=
\left\{
  \begin{array}{ll}
\pressure (\mapa, \rpot) - \int \rpot d\mu - h_{\mu}(\mapa) & \text{if } \mu\in\imspc; \\
+\infty & \text{if } \mu \in \mspc \setminus \imspc.
  \end{array}
\right.
$$
Then every sequence~$( \ldpm_n )_{n \ge 1}$  of Borel probability measures on~$\mspc$ such that for every~$\psi\in\cW$,
\begin{equation}\label{e:functional equality}
\lim_{n \to + \infty} \frac{1}{n} \log \int_{\mspc} \exp \left( n \int \gpot d \mu \right) d \ldpm_n(\mu)
=
\pressure (\mapa, \rpot + \gpot) - \pressure ( \mapa, \rpot),
\end{equation}
satisfies  a large deviation principle   with rate   function $\rrate$, and it converges in the weak$^*$ topology to the Dirac mass supported on the unique equilibrium state of~$\mapa$ for the potential~$\rpot$. Furthermore, for each convex and open subset~$\sG$ of~$\mspc$ containing some invariant measure, we have
\[
\lim_{n \to + \infty} \frac{1}{n} \log \ldpm_n(\sG)
=
\lim_{n \to + \infty} \frac{1}{n}\log\ldpm_n(\overline{\sG})
=
-\inf_{\sG} \rrate
=
-\inf_{\overline{\sG}} \rrate.
\]
\end{theoalph}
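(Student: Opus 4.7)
The plan is to apply the classical G\"artner--Ellis--Varadhan scheme on the compact space~$\mspc$, adapted to the fact that~\eqref{e:functional equality} only holds a priori on the dense subspace~$\cW$. The first step is to introduce the free energy $F(\gpot) \= \pressure(\mapa, \rpot + \gpot) - \pressure(\mapa, \rpot)$, a convex $1$\nobreakdash-Lipschitz function on $(C(\spc), \|\cdot\|_\infty)$. Since both sides of~\eqref{e:functional equality} are $1$\nobreakdash-Lipschitz in~$\gpot$ (the left because each~$\mu$ is a probability measure, the right by standard pressure estimates), the density of~$\cW$ extends~\eqref{e:functional equality} to the identity $\lim_n \tfrac{1}{n}\log \int \exp(n\int \gpot\, d\mu)\, d\ldpm_n(\mu) = F(\gpot)$ for every $\gpot \in C(\spc)$. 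The second preparatory step is to identify $\rrate$ with the Legendre conjugate $F^*(\mu) \= \sup_{\gpot \in C(\spc)}(\int \gpot\, d\mu - F(\gpot))$: on~$\imspc$ the upper semi\nobreakdash-continuity of the entropy and the variational principle together give the dual relation $h_\mu(\mapa) = \inf_{\gpot \in C(\spc)}(\pressure(\mapa,\gpot) - \int \gpot\, d\mu)$, whence $F^* = \rrate$; on $\mspc \setminus \imspc$ the equality $F^* = +\infty$ is forced by testing with scalar multiples of coboundaries $\gpot_0 - \gpot_0 \circ \mapa$, on which $F$ vanishes but which detect non\nobreakdash-invariance.

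With these preliminaries, the LDP upper bound follows from the exponential Chebyshev inequality: for every open $V \subset \mspc$ and every $\gpot \in C(\spc)$,
\[
\frac{1}{n}\log\ldpm_n(V) \le -\inf_{\mu \in V}\int \gpot\, d\mu + \frac{1}{n}\log\int_{\mspc} \exp\!\Big(n\int \gpot\, d\mu\Big)\, d\ldpm_n(\mu),
\]
so that $\limsup \tfrac{1}{n}\log \ldpm_n(V) \le F(\gpot) - \inf_V \int \gpot\, d\mu$. Since $\mspc$ is compact, every closed $\sF$ is compact, and covering~$\sF$ by finitely many such neighborhoods with $\gpot$ chosen via the Legendre representation of $F^* = \rrate$ yields $\limsup \tfrac{1}{n}\log \ldpm_n(\sF) \le -\inf_\sF \rrate$.

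The LDP lower bound is the main obstacle. The key observation is that, for each $\gpot \in \cW$, the uniqueness of the equilibrium state $\mu_{\rpot+\gpot}$ of $\rpot+\gpot$ makes $\mu_{\rpot+\gpot}$ an exposed point of~$\rrate$ with exposing hyperplane $\gpot \in \cW$ (strict maximizer of $\nu \mapsto \int \gpot\, d\nu - \rrate(\nu)$). To exploit this I will use exponential tilting: set $Z_n(\gpot) \= \int \exp(n \int \gpot\, d\mu)\, d\ldpm_n$ and $d\ldpm_n^{\gpot} \= Z_n(\gpot)^{-1} \exp(n \int \gpot\, d\mu)\, d\ldpm_n$. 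A direct computation shows that $(\ldpm_n^{\gpot})_n$ satisfies~\eqref{e:functional equality} with~$\rpot$ replaced by $\rpot+\gpot$ and the same subspace~$\cW$ (here the vector space structure of~$\cW$ is essential). Applying the upper bound just established to~$(\ldpm_n^{\gpot})_n$, whose rate function $I^{\rpot+\gpot}$ has unique zero $\mu_{\rpot+\gpot}$, forces $\ldpm_n^{\gpot} \to \delta_{\mu_{\rpot+\gpot}}$ in the weak$^*$ topology, and unwinding the tilt produces the local lower bound $\liminf \tfrac{1}{n}\log \ldpm_n(V) \ge -\rrate(\mu_{\rpot+\gpot})$ for every open $V \ni \mu_{\rpot+\gpot}$. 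The hardest step is the passage from this family of local bounds to arbitrary open $\sG$ and arbitrary $\mu \in \sG \cap \imspc$ with $\rrate(\mu) < +\infty$; this will rest on the G\^ateaux differentiability of~$F$ at every $\gpot \in \cW$ (uniqueness of equilibrium state equals uniqueness of subgradient) together with a density argument \`a la~\cite{Kif90}, showing that $\{\mu_{\rpot+\gpot}: \gpot \in \cW\}$ exhausts the effective domain of~$\rrate$ with matching rate values.

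Finally, weak$^*$ convergence $\ldpm_n \to \delta_{\mu_\rpot}$ follows from the LDP upper bound together with the fact that $\mu_\rpot$ (corresponding to $\gpot = 0 \in \cW$) is the unique zero of~$\rrate$. For the last assertion, given a convex open $\sG$ containing some $\mu_0 \in \imspc$, the convexity of~$\rrate$ (automatic as a Legendre conjugate) together with the convexity of~$\sG$ gives $t\nu + (1-t)\mu_0 \in \sG$ and $\rrate(t\nu + (1-t)\mu_0) \le t\rrate(\nu) + (1-t)\rrate(\mu_0)$ for every $\nu \in \overline{\sG}$ and $t \in [0,1)$; letting $t \to 1$, with $\rrate(\mu_0)$ finite by the upper semi\nobreakdash-continuity of the entropy, yields $\inf_\sG \rrate \le \rrate(\nu)$ for every $\nu \in \overline{\sG}$, whence $\inf_\sG \rrate = \inf_{\overline{\sG}} \rrate$, and combining with the LDP upper and lower bounds delivers the full chain of equalities.
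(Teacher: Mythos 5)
Your upper bound (Chebyshev plus compactness), the identification $\rrate=F^*$ (a variant of the paper's Lemma~\ref{l:LDP}), the extension of~\eqref{e:functional equality} from~$\cW$ to all of $C(\spc)$ by $1$\nobreakdash-Lipschitz continuity, the deduction of weak$^*$ convergence from the upper bound and the unique zero of~$\rrate$, and your treatment of the final statement (using convexity of $F^*$ and the segment trick, instead of the paper's affineness-of-entropy argument) are all sound. The tilting step is also fine as far as it goes: it yields the local lower bound $\liminf_n \tfrac1n\log\ldpm_n(V)\ge -\rrate(\mu_{\rpot+\gpot})$ for open $V\ni\mu_{\rpot+\gpot}$ and every $\gpot\in\cW$.

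The genuine gap is the step you yourself flag as the hardest one: passing from these local bounds at the countably-many-directions family of equilibrium states $\{\mu_{\rpot+\gpot}:\gpot\in\cW\}$ to the lower bound at an arbitrary $\mu\in\imspc$ with $\rrate(\mu)<+\infty$. You assert this will follow from a ``density argument \`a la~\cite{Kif90}, showing that $\{\mu_{\rpot+\gpot}:\gpot\in\cW\}$ exhausts the effective domain of~$\rrate$ with matching rate values,'' but no such argument is given, and the claim is not a routine consequence of the hypotheses: it is essentially the approximation condition that appears as a \emph{hypothesis} in~\cite[Theorem~4]{Com09} (approximation of every invariant measure in the weak$^*$ topology \emph{and} in entropy by unique equilibrium states), which the paper explicitly contrasts with the hypotheses of Theorem~\ref{t:LDP} rather than derives from them. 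A Bishop--Phelps/Israel-type argument produces equilibrium states of \emph{some} nearby potentials approximating a given invariant measure, but with no control forcing the potential into $\rpot+\cW$ nor the uniqueness needed for your tilting argument, and equilibrium states depend only upper semi-continuously on the potential, so one cannot simply perturb into~$\cW$. The results you would want to emulate do not prove such a density statement either: Kifer's Theorem~2.1 and \cite[Corollary~4.6.14]{DemZei98} obtain the lower bound (for convex open sets, which form a basis) by a convex-duality argument at the level of the free energy, exploiting G\^ateaux differentiability on the \emph{whole} dual. The paper closes exactly this point by transporting the problem: it embeds $\mspc$ into the space~$\cX$ of linear functionals on~$\cW$ with the $\cW$\nobreakdash-topology, so that the topological dual of~$\cX$ is precisely $\{\hgpot:\gpot\in\cW\}$, where G\^ateaux differentiability of the limiting functional holds everywhere (via part~2 of Lemma~\ref{l:LDP} and \cite[Proposition~5.3]{EkeTem99}), and then invokes \cite[Corollary~4.6.14]{DemZei98} wholesale; since $\cW$ is dense, the embedding is a homeomorphism on $\mspc$ and the rate transports back to~$\rrate$. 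Without either invoking such a result or reproducing the convex-analytic lower-bound argument behind it, your proof of the lower bound is incomplete.
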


The method we use to prove Theorem~\ref{t:LDP} is in line with the general functional approach of large deviations in probability theory.
This approach seems to have been initiated by  Sievers in \cite{Sie69} and then by Plachky and Steinebach in \cite{Pla71,PlaSte75} in order  to generalize to sequences of   dependent random variables  the large deviation principle proved by Cramer (in a special case)  and Chernoff (in the general case) for the laws  of empirical means of independent and identically distributed random variables in~$\R$ \cite{Cra38,Che52}.  The result was extended to $\mathbb{R}^d$-valued random variables in \cite{Gar77}, and then  refined by Ellis  leading to the well-known G\"{a}rtner-Ellis theorem in~\cite{Ell84}, that was later generalized by Baldi in~\cite{Bal88} to  real  topological vector spaces.

For  the case of dynamical systems,   Takahashi in \cite{Tak84,Tak87} studied the  large deviation functional associated to the  distributions of Birkhoff averages with respect to some (not necessarily invariant) measure.
  Then, in a very general setting, Kifer   gave  sufficient conditions in order to get the  large deviation  principle  with convex rate function,  for empirical measures \cite[Theorem 2.1]{Kif90}.
This result can be seen as  a purely theoretic large deviation one, in the sense that the hypotheses do not depend on a system under which the  empirical  measures could evolve (see Remark~\ref{remark-proof-LDP-Thm}).
This allowed Kifer to derive more specific results for dynamical systems; the first one
 concerns  the distribution of these empirical  measures with respect to  some reference measure, like in  the third case  of Theorem~\ref{t:LDP TCE} \cite[Theorem 3.4]{Kif90} (see Appendix~\ref{s:flows}); the second  one deals with the case where  these measures are governed by a Markov process \cite[Theorem 4.1]{Kif90}.
Recently, the first named author gave another type of sufficient condition in order to get a large deviation principle with the same rate function~\cite[Theorem 4]{Com09}.

In all the above results, the first basic assumption relates the pressure to the  large deviation functional associated to the sequence or net of measures (see \S\ref{s:preliminaries}).
Roughly speaking,  it is required that
 the (translated) pressure functional coincides with the large deviation functional; rigorously, this means that~\eqref{e:functional equality} holds for all $\gpot \in C(X)$ (or equivalently, for all $\gpot$ in a dense subset of $C(X)$).
 It turns out that the existence of the limit in the left hand side of~\eqref{e:functional equality} is also necessary in order to have  the large deviation principle, and the fact that it coincides with the pressure is necessary in order to have the rate function of  Theorem~\ref{t:LDP TCE} (see Remark~\ref{remark-equivalence-of-condition-in-theo-C}).

  The second basic assumption is in fact a condition on the large deviation functional in disguise; we refer the reader to  Remark~\ref{remark-proof-LDP-Thm} and Appendix~\ref{s:flows} in the case of  Kifer's theorem.
In the case of~\cite{Com09}, it is required  that every invariant measure can be approximated in the weak$^*$ topology, and in entropy, by measures which are unique equilibrium states for some potentials; when (\ref{e:functional equality}) holds for all $\psi\in C(X)$, this turns out to be the usual Baldi's condition in large deviation theory~\cite{Bal88}.

  We can summarize the functional approach by saying it consists to look  for sufficient conditions on the large deviation functional implying  the large deviation principle.  The rate function~\eqref{t:LDP TCE-eq1} is then a natural candidate when the first above mentioned basic assumption  holds, since in this case it is the only possible convex rate function (namely,  the Legendre-Fenchel transform of the restriction of the large deviation functional to the topological dual of the space of finite signed Borel measures on $X$ (\textit{i.e.} $C(X)$);  see \cite{DemZei98} and  \cite{Com09} in connection with Remark~\ref{remark-proof-LDP-Thm}).

\subsection{Organization}
\label{ss:organization}
After some preliminaries in~\S\ref{s:preliminaries}, we give the proof of Theorem~\ref{t:LDP} in~\S\ref{s:LDP}.
In Appendix~\ref{s:flows} we use this result to give another variant of Kifer's result for semi-flows~\cite[Theorem~3.4]{Kif90}, that we state as Theorem~\ref{Kifer-theo}.

We start~\S\ref{s:LDP TCE} by deriving the proof of Theorem~\ref{t:equilibria} from~\cite[Theorem~8]{Dob0804} in~\S\ref{ss:transfer operator}.
Then we obtain Theorem~\ref{t:LDP TCE} and its corollaries in~\S\ref{ss:proof of LDP TCE}, from Theorem~\ref{t:equilibria} and Theorem~\ref{t:LDP}, using several characterizations of the pressure given in~\S\ref{ss:characterizations pressure}.

In Appendix~\ref{s:Ruelle-Perron-Frobenius} we give a reasonably self contained proof of Theorem~\ref{t:equilibria} as a consequence of a Ruelle-Perron-Frobenius type theorem (Theorem~\ref{t:Ruelle-Perron-Frobenius}).

\subsection{Acknowledgements} We thank Godofredo Iommi for a useful remark
concerning Theorem~\ref{t:equilibria}.

\section{Preliminaries}\label{s:preliminaries}

\subsection{Notation}
\label{ss:notation}
We denote by $\RR = \R \cup \{- \infty, + \infty \}$ the extended real line.
We denote by~$\dist$ the spherical metric on~$\CC$.
Given a subset~$E$ of~$\CC$ we denote by~$\one_E$ the indicator function of~$E$.
We will denote~$\one_{\CC}$ simply by~$\one$.

\subsection{Measure spaces}
Given a compact metric  space~$\spc$, we denote by~$C(\spc)$ the space of continuous functions defined on~$X$ taking images in~$\R$, endowed with the uniform topology.
We identify the dual of~$C(\spc)$ with the  space~$\smspc$ of finite signed Borel measures on~$\spc$ endowed with the weak$^*$ topology \cite[\S{}IV.6, Theorem~3]{DunSch88a}.
We denote by $\mspc\subset\smspc$ the space of Borel probability measures on $X$, and recall that  $\mspc$ is compact \cite[\S{}V.4, Theorem 2]{DunSch88a} and metrizable \cite[\S{}V.5, Theorem~1]{DunSch88a}.
If~$\mapa : \spc \to \spc$ is a continuous map, then we denote by $\imspc$ the  compact subset of $\mspc$ constituted by the measures that are invariant by~$\mapa$.

\subsection{Convex analysis}
\label{ss:convex analysis}
Let~$\cX$ be a locally convex Hausdorff real topological vector space, and let~$\cX^*$ be its topological dual.
The \textit{Legendre-Fenchel} transform  of a function~$f : \cX \to \RR$ is by definition the function $f^* : \cX^* \to \RR$ defined by
$$ f^*(u)
=
\sup \left\{ u(x) - f(x) : x \in \cX \right\}. $$
The \textit{duality theorem} asserts that if~$f$ is convex, lower semi-continuous and takes values in~$(- \infty, + \infty]$, then for each $x \in \cX$ we have
$$ f(x) = \sup \{ u(x) - f^*(u) : u \in \cX^* \}; $$
see for example~\cite[\S{}I, Proposition~4.1]{EkeTem76}.

\subsection{Thermodynamic formalism}
\label{ss:thermodynamic formalism}
The reader may refer to~\cite{Wal82,Rue04} for background in ergodic theory and thermodynamic formalism, and~\cite{PUbook,Zin96} for an introduction in the case of rational maps.

Let~$\spc$ be a compact metric space with metric $d$,  and let~$\mapa : \spc \to \spc$ be a continuous map.
For $\mu \in \imspc$ we will denote by~$h_{\mu}(\mapa)$ the measure-theoretic entropy of~$\mu$.
We now recall the definition of topological pressure through ``$(n, \varepsilon)$\nobreakdash-separated sets'', that will be needed in~\S\ref{ss:characterizations pressure}.
 Denote by $\mapa \times \mapa : X \times X \to X \times X$
the diagonal action defined by $\mapa \times \mapa (x, x') = (\mapa(x), \mapa(x'))$.
Given an integer $n \ge 1$ we denote by $\distancia_n$ the distance
on~$X$ defined by
$$
\distancia_n
=
\max \left\{ \distancia \circ (\mapa \times \mapa)^j : j \in \{ 0, \ldots, n - 1 \} \right\}.
$$
Note that $\distancia_1 = \distancia$.
Given $\varepsilon > 0$ we say that a subset~$\sN$ of~$X$ is $(n, \varepsilon)$\nobreakdash-\textit{separated}, if for each pair of distinct elements~$x$, $x'$ of~$\sN$ we have $\distancia_n(x, x') > \varepsilon$.
For an integer $n \ge 1$ and a continuous function $\rpot : X \to \R$ we put
$$ S_n(\rpot) = \rpot + \rpot \circ \mapa + \cdots + \rpot \circ \mapa^{n - 1}. $$
Then the pressure function is equal to
$$
\pressure(\mapa, \rpot)
=
\lim_{\varepsilon \to 0} \lim_{n \to + \infty} \sup_{\sN} \sum_{y \in \sN} \exp(S_n(\rpot)(y)),
$$
where the supremum is taken over all $(n, \varepsilon)$\nobreakdash-separated subsets~$\sN$ of~$X$.
The fact that the pressure function defined with $(n, \varepsilon)$\nobreakdash-separated sets as above is equal to the supremum in~\eqref{e:variational pressure}, is known as \textit{the variational principle}.
When the topological entropy of~$\mapa$ is finite, the topological pressure viewed as a function defined on~$C(\spc)$, takes finite values and it is Lipschitz continuous~\cite[Theorem~9.7]{Wal82}.

\subsection{Large deviations}\label{prelim-LDP}
We recall  here some basic facts of large deviation theory that  will be used in the sequel.
Since  we will allude to large deviations for nets in place of sequences, and in various types of topological spaces, we state  them in a  general topological setting, and  refer the reader to \cite{DemZei98,Com03,MR2363442,Ell85} for more details.

 Let $(\Omega_\alpha)$ be a net  of
Borel probability measures on a Hausdorff topological space $\sX$, and let $(t_\alpha)$ be a net in $(0,+\infty)$ converging to~$0$.
We say that $(\Omega_\alpha)$ satisfies a \textit{large deviation principle with powers $(t_\alpha)$} if there exists a
 lower semi-continuous function $I:\sX\rightarrow[0,+\infty]$  such
that
\begin{equation}\label{prelim-LDP-eq1}
\limsup_{t_\alpha\rightarrow 0} \  t_\alpha\log\Omega_{\alpha}(\sF)\le- \inf \left\{ I(x) : x \in \sF \right\} \ \ \ \ \ \ \ \textnormal{for all closed $\sF\subset \sX$,}
\end{equation}
and
 \begin{equation}\label{prelim-LDP-eq1.1}
 \liminf_{t_\alpha\rightarrow 0}\  t_\alpha\log\Omega_\alpha(\sG)\ge - \inf \left\{ I(x) : x\in \sG \right\} \ \ \ \ \ \ \ \textnormal{for all open $\sG\subset \sX$.}
 \end{equation}
Such a function~$I$ is then unique when~$\sX$ is regular; it is called the \textit{rate function}, and is given for each $x\in \sX$ and each local basis~$\sG_x$ at~$x$ by
\begin{equation}\label{prelim-LDP-eq2}
\begin{split}
-I(x)
& = \inf \left\{ \liminf_{t_\alpha\rightarrow 0}\  t_\alpha\log\Omega_\alpha(\sG) : \sG\in\sG_x \right\}
\\ & = \inf \left\{ \limsup_{t_\alpha\rightarrow 0}\  t_\alpha\log\Omega_\alpha(\sG) : \sG \in \sG_x \right\}.
\end{split}
\end{equation}
A Borel set
 $\sA\subset \sX$ is called  a \textit{$I$-continuity set} if
\[
\inf \left\{ I(x) : x \in \textnormal{Interior}(\sA) \right\}
=
\inf \left\{ I(x) : x \in \overline{\sA} \right\}.
\]
When \eqref{prelim-LDP-eq1} and ~\eqref{prelim-LDP-eq1.1} hold, then
$\lim_{t_\alpha\rightarrow 0} t_\alpha\log\Omega_\alpha(\sA)$ exists and satisfies
\[\lim_{t_\alpha\rightarrow 0} t_\alpha\log\Omega_\alpha(\sA) = - \inf \left\{ I(x) : x\in \sA\right\} \]
and we can replace~$\sA$ by either its interior or its closure in the above equality.
When only~\eqref{prelim-LDP-eq1} (resp.~\eqref{prelim-LDP-eq1.1}) is satisfied,  we say that the \textit{large deviation upper (resp. lower) bounds} hold with the function~$I$.

The \textit{contraction principle} asserts that when~$(\Omega_\alpha)$ is supported by a compact subset of~$\sX$ and~$(\Omega_\alpha)$  satisfies a large
deviation principle with powers~$(t_\alpha)$ and rate function $I$, then for every Hausdorff topological space~$\sY$ and any continuous map $g:\sX\rightarrow \sY$ the net of image measures~$(g[\Omega_\alpha])$ satisfies a  large
deviation principle with powers~$(t_\alpha)$ and rate function  defined on $\sY$ by
\[y\mapsto\inf \{I(x):x\in \sX,g(x)=y\}.\]

The \textit{large deviation functional} associated to~$(\Omega_\alpha)$ and~$(t_\alpha)$ is the map defined on the set of $[-\infty,+\infty)$-valued Borel
functions~$h$ on~$\sX$ by
\begin{equation}\label{prelim-LDP-eq4}
h\mapsto\limsup_{t_\alpha \to 0}   t_\alpha\log\int \exp\left( h/t_\alpha \right ) d\Omega_\alpha;
\end{equation}
it
 is continuous  with respect to  the uniform metric.
Assume  that $\sX$ is a  Hausdorff real topological vector space, let $\sX^*$ denote its topological dual endowed with the weak$^*$ topology, and let
 $\overline{L}$ be the restriction of the large deviation functional (\ref{prelim-LDP-eq4})  to $\sX^*$; for $u\in\sX^*$ we shall write~$L(u)$ when the limit  exists in (\ref{prelim-LDP-eq4}).
When the net $(\Omega_\alpha)$ is supported by a  compact subset $\sK\subset\sX$, then $\overline{L}$ is a convex lower semi-continuous function.
In the literature, $\overline{L}$ is also known as the ``generalized log-moment generating function'', ``free-energy'', or ``pressure'', depending of the context.

The above notions will be applied   with $\sX=\widetilde{\sM}(X)$ (strictly speaking,~$\sX$ will be  homeomorphic to $\widetilde{\sM}(X)$),  $\sK=\sM(X)$, $\sY=\mathbb{R}$ and $g=\widehat{\psi}$ for some  $\psi\in C(X)$, where  $\widehat{\psi}$ is the evaluation map (\textit{i.e.} $\widehat{\psi}(\mu)=\int\psi d\mu$). Note that if $L(\widehat{\psi})$ exists for all $\psi$ in a  dense subset of $C(X)$, then  $L(\widehat{\psi})$ exists for all $\psi\in C(X)$.
In this context, the large deviation principles in~$\mspc$, or more generally in~$\smspc$, are usually referred to as ``level\nobreakdash-2'', and the ones in~$\mathbb{R}$ (in particular those obtained by contraction) as ``level\nobreakdash-1''.

\section{Proof of Theorem~\ref{t:LDP}}\label{s:LDP}
This section is devoted to the proof of Theorem~\ref{t:LDP}.
It is based on Lemma~\ref{l:LDP} below, which identifies the rate function as a Legendre-Fenchel transform.
Throughout the rest of this section we fix~$\spc, \mapa, \cW, \rpot$  as in the statement of Theorem~\ref{t:LDP}.
Note that the hypothesis of Theorem~\ref{t:LDP}, that the measure-theoretic entropy  is finite and upper semi-continuous, implies that for every~$\psi \in C(\spc)$ the  pressure~$\pressure(\mapa, \psi)$ is finite.

\begin{lemm}\label{l:LDP}
Let $Q_{\rpot} : C(\spc) \to \R$ be the function defined by
$$ Q_\rpot(\gpot) = \pressure( \mapa, \rpot + \gpot) - \pressure(\mapa, \rpot).$$
Then the following properties hold.
\begin{itemize}
\item[1.]
The function~$Q_{\rpot}$ is continuous, convex, and its Legendre-Fenchel transform $Q_\rpot^*$  is given by
$$ Q_{\rpot}^*(\mu)
=
\begin{cases}
\pressure ( \mapa, \rpot) - \int \rpot d \mu - h_{\mu}(\map) & \text{if } \mu\in \imspc; \\
+\infty & \text{if } \mu \in \smspc \setminus \imspc.
\end{cases}
$$
In particular~$Q_{\rpot}^*$ takes images in~$[0, + \infty]$, and it vanishes precisely on the set of equilibrium states of~$\mapa$ for the potential~$\rpot$.
Note furthermore that ${Q_{\rpot}^*}_{\mid_{ \mspc}}= \rrate$.
\item[2.] For each $\psi\in C(X)$, a measure $\mu\in\sM(X,\map)$ is an  equilibrium state of~$\mapa$ for the potential $\varphi+\psi$ if and only if
$Q_\varphi(\psi)=\int\psi d\mu-{Q_{\rpot}^*}(\mu)$.
\end{itemize}
\end{lemm}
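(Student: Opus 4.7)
Plan. I would begin with the easy half of part 1: $\pressure(\mapa,\cdot)$ is Lipschitz on $C(\spc)$ under the standing assumption (recalled in \S\ref{ss:thermodynamic formalism}), so $Q_\rpot$ is continuous, and the variational principle~\eqref{e:variational pressure} presents $\pressure(\mapa,\cdot)$ as a pointwise supremum of affine functions $\gpot \mapsto h_\mu(\mapa) + \int \gpot \, d\mu$ indexed by $\mu \in \imspc$, hence $\pressure(\mapa,\cdot)$ and $Q_\rpot$ are convex.

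The heart of the lemma is the identification of the Legendre-Fenchel transform. Denoting by $\pressure^*$ the Legendre-Fenchel transform of $\pressure(\mapa,\cdot)$, the substitution $\gpot \leftrightarrow \gpot - \rpot$ in the supremum defining $Q_\rpot^*$ yields, for every $\mu \in \smspc$, the identity
\[
Q_\rpot^*(\mu) = \pressure(\mapa,\rpot) - \int \rpot \, d\mu + \pressure^*(\mu).
\]
It therefore suffices to prove the standard identification $\pressure^*(\mu) = -h_\mu(\mapa)$ for $\mu \in \imspc$ and $\pressure^*(\mu) = +\infty$ for $\mu \in \smspc \setminus \imspc$.

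For $\mu \in \imspc$, the variational principle gives $\int \gpot \, d\mu - \pressure(\mapa,\gpot) \le -h_\mu(\mapa)$ for every $\gpot \in C(\spc)$, and the matching lower bound $\pressure^*(\mu) \ge -h_\mu(\mapa)$ is the classical dual variational formula for entropy (\cite[Theorem~9.12]{Wal82}), which is precisely where the standing upper semi-continuity hypothesis on the entropy is used. For $\mu \in \smspc \setminus \imspc$ I would split into: (i)~if $\mu(\spc) \neq 1$, test with constants $t\one$ and use $\pressure(\mapa, \gpot + t) = \pressure(\mapa,\gpot) + t$ to let $t \to \pm\infty$; (ii)~if $\mu$ is a signed measure of total mass~$1$ with $\mu(E) < 0$ for some Borel set~$E$, uniformly approximate $-\one_E$ by continuous $\gpot \in [-1,0]$ with $\int \gpot\,d\mu > 0$, and observe that $\pressure(\mapa,t\gpot) \le \pressure(\mapa,0)$ stays bounded for $t \ge 0$ while $\int t\gpot\,d\mu \to +\infty$; (iii)~if $\mu$ is a non-invariant probability measure, pick $\psi \in C(\spc)$ with $\int(\psi \circ \mapa - \psi)\,d\mu \neq 0$ and use the invariance of pressure under addition of coboundaries. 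In each case a suitable one-parameter family of test functions drives the supremum to $+\infty$.

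Once the formula for $\pressure^*$ is in hand, the remaining assertions are bookkeeping. The expression for $Q_\rpot^*$ coincides with $\rrate$ on $\mspc$ by direct comparison, $Q_\rpot^* \ge 0$ by applying the variational principle to $\rpot$ at $\mu$, and the vanishing set of $Q_\rpot^*$ is exactly the set of equilibrium states of $\mapa$ for $\rpot$. Part~2 is a direct unwinding: $\mu \in \imspc$ is an equilibrium state of $\mapa$ for $\rpot + \gpot$ iff $\pressure(\mapa, \rpot + \gpot) = h_\mu(\mapa) + \int(\rpot + \gpot)\,d\mu$, which rearranges into $Q_\rpot(\gpot) = \int \gpot \, d\mu - Q_\rpot^*(\mu)$; the case $\mu \in \smspc \setminus \imspc$ is excluded automatically since the right-hand side is then $-\infty$ while the left-hand side is finite. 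The only genuinely substantive input in the whole proof is the dual variational formula for entropy that uses upper semi-continuity; everything else is essentially formal manipulation of definitions.
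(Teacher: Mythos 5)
Your proof is correct, but it travels a somewhat different road than the paper's. The paper introduces the auxiliary function $U(\mu) = -\int \rpot\, d\mu - h_\mu(\mapa)$ on $\imspc$ (and $+\infty$ off $\imspc$), observes via the variational principle that $\gpot \mapsto \pressure(\mapa,\rpot+\gpot)$ is exactly the Legendre--Fenchel transform of $U$, and then applies the abstract duality (biduality) theorem of Ekeland--Temam to the convex, lower semi-continuous function $U$; this single stroke yields the formula for $Q_\rpot^*$ on all of $\smspc$, with the value $+\infty$ off $\imspc$ coming for free. You instead reduce $Q_\rpot^*$ to $\pressure^*$ by the translation $\gpot \leftrightarrow \gpot - \rpot$, invoke the classical dual entropy formula (\cite[Theorem~9.12]{Wal82}) for invariant $\mu$ -- which is where the upper semi-continuity hypothesis enters for you, just as it enters the paper through the lower semi-continuity of $U$ -- and then dispose of non-invariant signed measures by three explicit families of test functions (constants for mass $\neq 1$, approximations of $-\one_E$ for signed measures with a negative part, coboundaries $t(\psi\circ\mapa-\psi)$ for non-invariant probability measures); all three computations are sound, using positivity, monotonicity, and coboundary-invariance of the pressure. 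The trade-off: the paper's argument is shorter and self-contained modulo one citation to convex analysis, while yours replaces the abstract duality theorem by a citation to Walters plus elementary case-by-case checks, which makes the role of each degenerate class of measures more transparent but costs some length. Your treatment of convexity/continuity of $Q_\rpot$, of the nonnegativity and vanishing set of $Q_\rpot^*$, and of part~2 matches the paper's in substance.
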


\begin{proof}
The  convexity  and the  continuity of
 $Q_{\rpot}$  follow from the same properties of  the pressure function, see~\S\ref{ss:thermodynamic formalism}.
Let~$U : \smspc \to \RR$ be defined by
$$
U(\mu)
=
\begin{cases}
- \int \rpot d\mu  - h_{\mu}(\mapa) & \text{if } \mu \in \imspc;
\\
+\infty & \text{if } \mu \in \smspc \setminus \imspc.
\end{cases}
$$
Since the measure-theoretic entropy of~$\mapa$ is affine, and since it is upper semi-continuous by hypothesis, it follows that the function~$U$ is convex, lower semi-continuous, and that it takes values in $(-\infty,+\infty]$.
By the variational principle, for each~$\gpot \in C(\spc)$ we have
\begin{equation*}
\begin{split}
\pressure (\mapa, \rpot + \gpot)
& =
\sup \left\{ h_\mu (\mapa) + \int \rpot + \gpot d\mu : \mu \in \imspc \right\}
\\ & =
\sup \left\{ \int \gpot d\mu - U(\mu) : \mu \in \smspc \right\}.
\end{split}
\end{equation*}
This shows that the function $\gpot \mapsto \pressure(\mapa, \rpot + \gpot)$ is the Legendre-Fenchel transform of~$U$.
Hence, the duality theorem implies that for each $\mu \in \smspc$ we have,
\begin{equation*}
\begin{split}
U(\mu)
& =
\sup \left\{ \int \gpot d \mu - \pressure( \mapa, \rpot + \gpot) : \gpot \in C(\spc) \right\}
\\ & =
\sup \left\{ \int \gpot d\mu - \pressure(\mapa, \rpot) - Q_{\rpot}(\gpot) : \gpot \in C(\spc) \right\}
\\ & =
-\pressure(\mapa, \rpot) + \sup \left\{ \int \gpot d\mu - Q_{\rpot}(\gpot) : \gpot \in C(\spc) \right\}
\\ & =
-\pressure(\mapa, \rpot) + {Q_{\rpot}}^*(\mu).
\end{split}
\end{equation*}
This proves part 1. Then part 2  follows from the equalities
\begin{multline*}
 P(\map,\varphi+\psi)-h_\mu(\map)-\int(\varphi+\psi)d\mu
\\ =
Q_\varphi(\psi) + P(\map,\varphi) -h_\mu(\map) -\int(\varphi+\psi)d\mu
\\ =
Q_\varphi(\psi) + {Q_{\rpot}}^* (\mu)-\int\psi d\mu.
\end{multline*}
\end{proof}

\begin{proof}[Proof of Theorem~\ref{t:LDP}]
Let~$\cX$ be the space of all linear functionals on~$\cW$ endowed with the $\cW$-topology, \textit{i.e.} the coarsest topology such that for each $\gpot \in \cW$ the evaluation map $\hgpot : {\cX} \to \R$ defined by $\hgpot(u) = u(\gpot)$ is continuous. Note that $\cX$ is a locally convex real topological vector space with topological dual~$\cX^*=\{ \hgpot \mid \gpot \in \cW \}$.
Given~$\mu \in \mspc$ denote by~$\pi(\mu)$ the element of~$\cX$ such that for each~$\gpot \in \cW$ we have~$\pi(\mu)(\gpot) = \int \gpot d\mu$, and let~${\sM}_{\cW}(X)$ denote the image of the function~$\pi : \mspc \to \cX$ so defined.
Since by hypothesis~$\cW$ is a dense subspace of~$C(\spc)$, the map~$\pi$ is an homeomorphism from ${\sM}(X)$ onto~${\sM}_{\cW}(X)$;
in particular, ${\sM}_{\cW}(X)$ is a compact subset of~$\cX$.
We shall prove the large deviation principle for the sequence~$(\pi[\Omega_n])_{n \ge 1}$  in~${\sM}_{\cW}(X)$, and the corresponding statement for~$(\Omega_n)_{n \ge 1}$ in~${\sM}(X)$ will follow from the fact that~$\pi$ is a homeomorphism.
Let~$\overline{L}$ be  the restriction to~$\cX^*$ of the large deviation functional  associated to~$(\pi[\Omega_n])_{n \ge 1}$,  seen as a sequence of measures on $\cX$;
 recall that for~$\psi \in \cW$ for which the~$\limsup$ defining~$\overline{L}(\widehat{\psi})$ is a limit, we denote~$\overline{L}(\widehat{\psi})$ by~$L(\widehat{\psi})$   (see \S~\ref{prelim-LDP}).
By~\eqref{e:functional equality} we have for each $\psi\in \cW$,
\begin{equation}\label{e:functional equality 2}
{L}(\widehat{\psi}) = \lim_{n \to + \infty}  \frac{1}{n}\log\int_{\cX}\exp \left( n \widehat{\gpot}\right) d \pi[\ldpm_n]= Q_{\rpot}(\gpot).
\end{equation}
Since  $\cW$ is dense in~$C(X)$ and~$Q_{\rpot}$ is continuous (Lemma~\ref{l:LDP}), we get
for each $\mu\in\sM(X)$,
\begin{equation}\label{e:functional equality 4}
L^*(\pi(\mu))= Q_{\rpot}^*(\mu).
\end{equation}
The hypotheses on $\cW$ imply that $Q_{\rpot}$ is Gateaux differentiable at each~$\psi \in \cW$ by part 2 of Lemma~\ref{l:LDP} \cite[Proposition 5.3]{EkeTem99}, which by~\eqref{e:functional equality 2} is equivalent to the Gateaux differentiability of~$L$ on~$\cX^*$.
It follows that all the hypotheses of~\cite[Corollary~4.6.14]{DemZei98} applied to  the sequence ~$(\pi[\Omega_n])_{n \ge 1}$  are verified, and consequently~$(\pi[\Omega_n])_{n \ge 1}$ satisfies a large deviation principle in~$\cX$ with rate function~$L^*$.
Since $\sM_{\cW}(X)$ is closed  in $\cX$ the large deviation principle holds in $\sM_{\cW}(X)$ with rate function
${L^*}_{\mid_{\sM_{\cW}(X)}}$
\cite[Lemma~4.1.5]{DemZei98}, and thus with rate  function $Q_{\rpot}^*\circ\pi^{-1}=\rrate\circ\pi^{-1}$ by~\eqref{e:functional equality 4}.

To prove that~$(\ldpm_n)_{n \ge 1}$ converges to the Dirac mass at the unique equilibrium state~$\mu_{\rpot}$ of~$\mapa$ for the potential~$\rpot$, let~$\sG$ be an open neighborhood of~$\mu_{\rpot}$ in~$\mspc$.
Since~$\rrate$ is lower semi-continuous, non-negative, and it vanishes precisely on~$\{ \mu_{\rpot} \}$ (Lemma~\ref{l:LDP}), the infimum of~$\rrate$ on $\sF \= \mspc \setminus \sG$ is attained at some point of~$\sF$, and thus $\inf_{\sF} \rrate > 0$.
Therefore we have
$$ \limsup_{n \to + \infty} \frac{1}{n} \log \ldpm_n(\sF)
\le
- \inf_{\sF} \rrate
< 0,
$$
and $\lim_{n \to + \infty} \ldpm_n(\sG) = 1$.

To prove the last statement of the theorem, let $\sG \subset \mspc$ be a convex and open set containing an invariant measure~$\mu'$.
Since the function~$\rrate$ is lower semi-continuous, and since it takes finite values precisely on the compact set~$\imspc$ (Lemma~\ref{l:LDP}), there exists  $\mu \in \overline{\sG}\cap\imspc$ such that $\rrate(\mu)=\inf_{\overline{\sG}}\rrate$.
For each $t \in (0, 1)$ put $\mu_t = (1 - t) \mu + t \mu'$, and note that $\mu_t \in \imspc$ and $\mu_t \in \sG$ \cite[1.1, p.~38]{Sch71}.
Since the function~$\rrate$ is affine on~$\imspc$, we have
$$
\inf_{\sG} \rrate
\le
\lim_{t \to 0} \rrate(\mu_t)
=
\rrate(\mu)
=
\inf_{\ov{\sG}} \rrate.
$$
This shows that $\inf_{\sG} \rrate = \inf_{\overline{\sG}}\rrate$.
That is,~$\sG$ is a $I^{\varphi}$\nobreakdash-continuity set and the last assertion of the theorem follows (see~\S\ref{s:preliminaries}).
\end{proof}

\begin{rema}\label{remark-equivalence-of-condition-in-theo-C}
 The equality \eqref{e:functional equality}  is in fact necessary in order to have the  large deviation principle  with rate function $I^{\varphi}$. Indeed, when such a large deviation principle holds,
  Varadhan's theorem (\cite[Theorem 4.3.1]{DemZei98}, \cite[Corollary 3.4]{Com03}) states  that for each $\psi\in C(X)$ the limit
  \[\lim_{n \to + \infty} \frac{1}{n} \log \int_{\mspc} \exp \left( n \int \gpot d \mu \right) d \ldpm_n(\mu)\] exists  and satisfies
\begin{multline*}
\lim_{n \to + \infty} \frac{1}{n} \log \int_{\mspc} \exp \left( n \int \gpot d \mu \right) d \ldpm_n(\mu)
\\ =
\sup\left\{\int\psi d\mu-I^{\varphi}(\mu):\mu\in\sM(X)\right\}
\\ =
\sup\left\{\int\psi d\mu-{Q_{\varphi}}^*(\mu):\mu\in\widetilde{\sM}(X)\right\}
=
Q_{\varphi}(\psi).
\end{multline*}
  The upper semi-continuity  of the measure-theoretic entropy  is also necessary  since by definition the rate function is lower semi-continuous.
  \end{rema}

\begin{rema}\label{remark-proof-LDP-Thm}
The part of the  proof of Theorem~\ref{t:LDP} concerning the large deviation principle is a generalization of~\cite[Theorem 2.1]{Kif90};
 indeed, let us  consider a sequence of measures $(\ldpm_n)_{n \ge 1}$ on $\mspc$ whose associated limiting large deviation functional $L(\widehat{\cdot})$ exists on~$\smspc^*$, and for  $\gpot \in C(X)$ put $Q(\gpot)=L(\widehat{\gpot})$.
If we assume that~$Q$ is Gateaux differentiable at each point of a dense vector subspace $\cW$ of $C(X)$ (which is the hypothesis of \cite[Theorem 2.1]{Kif90}, and the one
 of  Theorem~\ref{t:LDP}  with $Q=Q_{\rpot}$), then the proof works verbatim (the convexity of~$L$ gives the convexity of~$Q$, and the uniform continuity of~$L$ gives the continuity of~$Q$ (since $\sup_{\mspc} \widehat{\psi}=\sup_X\psi$); the rate function is~$Q^*$).
We have used \cite[Corollary~4.6.14]{DemZei98} instead of~\cite[Theorem 2.1]{Kif90}, first because it is not clear how the proof of~\cite[Theorem 2.1]{Kif90}, which deals with special nets of measures, extends to general sequences, and second because it emphasizes the role of large deviation theory.
In fact,  \cite[Corollary~4.6.14]{DemZei98} can be thought of as an extension to general locally convex spaces of~\cite[Theorem 2.1]{Kif90}.
Indeed, the latter result deals with~$\smspc$, which can be identified (via the map~$\pi$, and thanks to the fact that~$\cW$ is dense in~$C(X)$) with a subspace of the locally convex space~$\cX$.
The hypotheses of \cite[Theorem 2.1]{Kif90} amount to both the equility~\eqref{e:functional equality 2} and the Gateaux differentiability of~$L$ on~$\cX^*$, which are precisely the hypotheses of \cite[Corollary~4.6.14]{DemZei98}.
\end{rema}

\section{Large deviation principles for TCE rational maps}
\label{s:LDP TCE}

The purpose of this section is to prove Theorem~\ref{t:equilibria}, as well as Theorem~\ref{t:LDP TCE} and its corollaries.
The proof of Theorem~\ref{t:equilibria} is deduced from~\cite[Theorem~8]{Dob0804} in~\S\ref{ss:transfer operator}, after recalling some well known definitions and results about transfer operators.
After giving several equivalent characterizations of the pressure function in~\S\ref{ss:characterizations pressure}, we give the proof of Theorem~\ref{t:LDP TCE} and its corollaries in~\S\ref{ss:proof of LDP TCE}.

\subsection{The transfer operator and conformal measures.}\label{ss:transfer operator}
Fix a rational map $\map : \CC \to \CC$ of degree at least two.
For $y \in \CC$ we denote by $\deg_{\map}(y)$ the local degree of~$\map$ at~$y$.
Given a continuous function $\rpot : J(\map) \to \R$ we denote by~$\sL_\rpot$ the (Ruelle-Perron-Frobenius) \textit{transfer operator}, acting on the space of functions defined on~$J(\map)$ and taking values in~$\R$, by
$$
\sL_\rpot (\gpot) (x)
=
\sum_{y \in \map^{-1}(x)} \deg_\map(y) \exp(\rpot(y)) \gpot(y).
$$
Note that~$\sL_{\rpot}$ acts continuously on the space of continuous functions.
We denote by $\sL_\rpot^*$ the continuous operator acting on~$\smjulia$ by
$$ \int \gpot d \sL_\rpot^*(\eta) = \int \sL_\rpot (\gpot) d \eta. $$
Note that it maps non-zero measures to non-zero measures.
By the change of variable formula it follows that for every Borel measure~$\eta$ and every measurable function $\gpot : J(\map) \to \R$ satisfying $\int |\gpot| d\eta < + \infty$, we have $\int |\sL_\rpot(\gpot)| d\eta < + \infty$ and $\int \gpot d\sL_\rpot^*(\eta) = \int \sL_\rpot (\gpot) d\eta$.

Given a continuous function $g: J(\map) \to [0, + \infty)$ we say that a Borel measure~$\eta$ supported on~$J(\map)$ is $g$\nobreakdash-\textit{conformal for~$\map$} if for every subset~$E$ of~$J(\map)$ on which~$\map$ is injective we have $\eta(\map(E)) = \int_E g d\eta$.

The following lemma is well-known.
Part~2 is a special case of \cite[Proposition~2.2]{DenUrb91a}.
\begin{lemm}\label{l:conformal measures as engienvectors}
Let~$\map$ be a complex rational map, and let $\rpot : J(\map) \to \R$ be a continuous function.
Then the following conclusions hold.
\begin{enumerate}
\item[1.]
There is $\lambda > 0$ and a Borel probability measure~$\eta$ such that $\sL_\rpot^*(\eta) = \lambda \eta$.
\item[2.]
For a given $\lambda > 0$, a Borel measure~$\eta$ supported on~$J(\map)$ is $\lambda \exp(-\rpot)$\nobreakdash-conformal for~$\map$ if and only if $\sL_\rpot^* \eta = \lambda \eta$.
\end{enumerate}
\end{lemm}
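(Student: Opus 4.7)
\smallskip

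\noindent\textbf{Proof proposal.}
The plan is to handle the two assertions by quite different techniques: part~1 by a fixed point argument in the space of probability measures on $J(\map)$, and part~2 by a direct computation relating the defining identity for $\sL_\rpot^*$\nobreakdash-eigenmeasures to the conformality identity, via a localization on sets where $\map$ is injective.

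For part~1, first I would observe that the function $\sL_\rpot(\one)(x)=\sum_{y\in\map^{-1}(x)}\deg_\map(y)\exp(\rpot(y))$ is continuous and strictly positive on the compact set $J(\map)$, so the scalar $c(\eta)\=\sL_\rpot^*(\eta)(J(\map))=\int\sL_\rpot(\one)\,d\eta$ is bounded below away from zero uniformly in $\eta\in\pmjulia$. This makes
\[
F:\pmjulia\longrightarrow\pmjulia,\qquad F(\eta)=\tfrac{1}{c(\eta)}\,\sL_\rpot^*(\eta),
\]
a well-defined map. Weak$^*$ continuity of $F$ follows from the fact that $\sL_\rpot$ sends $C(J(\map))$ continuously into itself (so $\eta\mapsto\int\sL_\rpot(g)\,d\eta$ is weak$^*$\nobreakdash-continuous for every $g\in C(J(\map))$), together with the strict positivity of $c(\cdot)$. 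Since $\pmjulia$ is a nonempty, convex, compact subset of the locally convex space $\smjulia$ equipped with the weak$^*$ topology, the Schauder--Tychonoff fixed point theorem yields $\eta_0\in\pmjulia$ with $F(\eta_0)=\eta_0$, i.e.\ $\sL_\rpot^*(\eta_0)=\lambda\eta_0$ with $\lambda=c(\eta_0)>0$.

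For part~2, the strategy is to test the identity $\sL_\rpot^*(\eta)=\lambda\eta$ against indicator-type functions of the form $f=\one_E\exp(-\rpot)$, where $E$ is a Borel set on which $\map$ is injective. On such a set $\map$ cannot have critical points (a critical point of a rational map is $k$\nobreakdash-to\nobreakdash-1 in every neighborhood for some $k\ge 2$), so $\deg_\map\equiv 1$ on $E$, and a direct computation gives
\[
\sL_\rpot(f)(x)=\sum_{y\in\map^{-1}(x)\cap E}\deg_\map(y)=\one_{\map(E)}(x).
\]
Integrating against $\eta$ turns $\sL_\rpot^*(\eta)=\lambda\eta$ into $\eta(\map(E))=\int_E\lambda\exp(-\rpot)\,d\eta$, which is precisely the $\lambda\exp(-\rpot)$\nobreakdash-conformality of $\eta$. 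For the converse implication I would cover $J(\map)$ by finitely many open sets on which $\map$ is injective, take a subordinate Borel partition, and use linearity and a monotone-class argument to extend the identity $\int\sL_\rpot(g)\,d\eta=\lambda\int g\,d\eta$ from $g=\one_E\exp(-\rpot)$ to all bounded Borel $g$, hence to $\sL_\rpot^*(\eta)=\lambda\eta$.

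The main subtlety I expect is purely bookkeeping in part~2: one must avoid double-counting when several preimages of a point lie in different pieces of the partition, and one must handle the boundaries of the partition pieces carefully (for instance, by arranging that these boundaries have $\eta$\nobreakdash-measure zero, or by approximation). Once the localization on injectivity sets is set up cleanly, the algebra collapsing $\sL_\rpot(\one_E\exp(-\rpot))$ to $\one_{\map(E)}$ is the content of the equivalence, and no further analytic input is needed.
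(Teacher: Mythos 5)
Your argument is essentially the paper's own proof: part~1 is the same normalization of $\sL_\rpot^*$ followed by the Schauder--Tychonoff theorem on the compact convex set $\mjulia$ (your extra remarks on the positive lower bound for $c(\eta)$ and on weak$^*$ continuity just make explicit what the paper leaves implicit), and part~2 is the same computation, testing $\sL_\rpot^*\eta=\lambda\eta$ against functions $\one_E\exp(-\rpot)$ and then passing to a finite Borel partition of $J(\map)$ into injectivity sets for the converse. One side-remark in your part~2 is false as stated: injectivity of $\map$ on $E$ does \emph{not} force $E$ to avoid critical points, since only one of the local preimages of a critical value need lie in $E$ (for $z\mapsto z^2$, the map is injective on a half-plane containing $0$ with half of its boundary removed), so you cannot conclude $\deg_\map\equiv 1$ on $E$. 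What is true, and what the paper asserts, is that $\sL_\rpot(\one_E\exp(-\rpot))=\one_{\map(E)}$ away from the finitely many critical values arising from critical points in $E$; after integrating against $\eta$ this discrepancy is invisible unless $\eta$ has atoms at those finitely many points, a caveat your write-up shares with the paper's own argument. So the flaw is in the justification of that one identity, not in the structure of the proof, which otherwise matches the paper step for step.
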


\begin{proof}
Let $\hsL_\rpot^*$ be the map acting on~$\mjulia$  defined by
$$
\hsL_\rpot^*(\eta) = (\sL_\rpot^*(\eta)(J(\map)))^{-1} \sL_\rpot^*(\eta).
$$
As~$\hsL_\rpot^*$ is continuous and~$\mjulia$ is compact and convex, the Schauder-Tychonoff theorem \cite[\S{}V.10, Theorem~5]{DunSch88a} then implies that $\hsL_\rpot^*$ has a fixed point~$\eta$.
Letting $\lambda = \sL_\rpot^*(\eta)(J(\map)) > 0$, we have $\sL_\rpot^*(\eta) = \lambda \eta$.

Note that for every Borel probability measure~$\eta$ and every Borel subset~$E$ of~$J(\map)$ on which~$\map$ is injective, we have $\sL_\rpot (\one_E \exp(- \rpot)) = \one_{\map(E)}$ and,
\begin{multline}\label{e:transfer as jacobian}
\eta(\map(E))
=
\int \one_{\map(E)} d\eta
\\ =
\int \sL_\rpot (\one_E \exp(- \rpot)) d\eta
=
\int \one_E \exp(-\rpot) d \sL_\rpot^*(\eta).
\end{multline}
So, if for some $\lambda > 0$ the measure~$\eta$ satisfies $\sL_\rpot^*(\eta) = \lambda \eta$, then~$\eta$ is $\lambda \exp(-\rpot)$\nobreakdash-conformal.
Suppose on the other hand that~$\eta$ is $\lambda \exp(-\rpot)$\nobreakdash-conformal.
Then~\eqref{e:transfer as jacobian} implies that for every Borel subset~$E$ of~$J(\map)$ on which~$\map$ is injective we have
$$
\int \one_E \exp(-\rpot) d \sL_\rpot^* \eta = \lambda \int \one_E \exp(- \rpot) d\eta.
$$
As~$J(\map)$ can be partitioned into a finite number of Borel sets on which~$\map$ is injective, this equality holds in fact for every Borel subset~$E$ of~$J(\map)$.
We thus have $\sL_\rpot^* \eta = \lambda \eta$.
\end{proof}

\begin{proof}[Proof of Theorem~\ref{t:equilibria}]
Let~$\map$ be a rational map satisfying the \TCEC{} and let~$\rpot : J(\map) \to \R$ be a H\"older continuous function.
Since the measure-theoretic entropy of~$\map$ is upper semi\nobreakdash-continuous~\cite{FreLopMan83,Lju83}, it follows that there is an equilibrium state~$\rho$ of~$\map$ for the potential~$\rpot$.
To prove the uniqueness, first observe that by Lemma~\ref{l:conformal measures as engienvectors} there is a $\left(\exp(\pressure(\map, \rpot) - \rpot)\right)$\nobreakdash-conformal probability measure for~$\map$.
On the other hand, by~\cite[Main Theorem]{PrzRivSmi03} the Lyapunov exponent of every invariant measure supported on~$J(\map)$ is positive, so \cite[Theorem~8]{Dob0804} implies that~$\rho$ is the unique equilibrium state of~$\map$ for the potential~$\rpot$.
\end{proof}

\subsection{Characterizations of the pressure function}
\label{ss:characterizations pressure}
Given a rational map~$\map$ satisfying the \TCEC{} and a H\"older continuous function $\rpot : J(\map) \to \R$, in this section we characterize the pressure function~$P(\map, \rpot)$ in terms of iterated preimages (Lemma~\ref{l:tree pressure}), periodic points (Lemma~\ref{l:periodic points}), and Birkhoff averages (Lemma~\ref{l:Birkhoff averages}); compare with~\cite{PrzRivSmi04}.
Lemma~\ref{l:tree pressure} is also used in Appendix~\ref{s:Ruelle-Perron-Frobenius}.
Compare Lemma~\ref{l:Birkhoff averages} with~\cite[Theorem~3]{Lop90}.

We will make use of the following equivalent formulation of the \TCEC~\cite[Main Theorem]{PrzRivSmi03}, for a rational map~$\map$ of degree at least two.
\begin{generic}[Exponential Shrinking of Components (\ExpShrink)] There exist $\la_{\ES}>1$ and $r_0 > 0$ such that for every $x \in J(\map)$, every integer $n \ge 1$ and every connected component~$W$ of $\map^{-n}(B(x, r_0))$ we have
$$
\diam(W) \le \la_{\ES}^{-n}.
$$
\end{generic}

Recall that for each integer~$n \ge 1$, and each $\gpot : J(\map) \to \R$ we denote
$$ S_n(\gpot) = \gpot + \gpot \circ \map + \cdots + \gpot \circ \map^{n - 1}. $$

\begin{lemm}\label{l:tree pressure}
Let~$\map$ be a rational map satisfying the \TCEC{} and let $\rpot : J(\map) \to \R$ be a H\"older continuous function.
Then there is a constant~$C_0 > 0$ such that for each $x \in J(\map)$ we have
$$ C_0^{-1}
\le
\exp(- n\pressure(\map, \rpot)) \cdot \sL_\rpot^n(\one)(x)
\le
C_0. $$
In particular,
\[
\lim_{n \to + \infty} \frac{1}{n} \log \sL_\rpot^n(\one)(x)
=
\pressure (\map, \rpot).
\]
\end{lemm}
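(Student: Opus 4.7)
My plan is to show that $\sL_\rpot^n(\one)(x) \asymp \exp(n\pressure(\map, \rpot))$ uniformly on $J(\map)$ by combining three ingredients: bounded distortion coming from the \ExpShrink{} formulation of TCE, the conformal measure supplied by Lemma~\ref{l:conformal measures as engienvectors}, and uniform topological exactness of~$\map$ on~$J(\map)$. The final step is to identify the conformal eigenvalue with $\exp(\pressure(\map, \rpot))$.

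First, I would establish bounded distortion. By \ExpShrink, any connected component $W$ of $\map^{-n}(B(x, r_0))$ satisfies $\diam(\map^j(W)) \le \lambda_{\ES}^{-(n-j)}$ for $0 \le j \le n$. Summing H\"older contributions of $\rpot$ along the orbit yields a uniform constant $C_1 > 0$ with $|S_n \rpot(y) - S_n \rpot(y')| \le C_1$ for every $y, y' \in W$. Since each such $W$ is a branched cover of $B(x, r_0)$ of some degree $d_W$, for any $x' \in B(x, r_0) \cap J(\map)$ it also contains preimages of $x'$ with total multiplicity $d_W$. Grouping the sums defining $\sL_\rpot^n(\one)(x)$ and $\sL_\rpot^n(\one)(x')$ by components then gives $e^{-2C_1} \le \sL_\rpot^n(\one)(x) / \sL_\rpot^n(\one)(x') \le e^{2C_1}$ whenever $d(x, x') < r_0$.

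Next, I would invoke Lemma~\ref{l:conformal measures as engienvectors} to fix a Borel probability measure $\eta$ and $\lambda_0 > 0$ with $\sL_\rpot^* \eta = \lambda_0 \eta$, so $\int \sL_\rpot^n(\one) \, d\eta = \lambda_0^n$. Covering $J(\map)$ by finitely many balls of radius $r_0/2$ and applying the local comparability above yields both an upper bound $\sup_{J(\map)} \sL_\rpot^n(\one) \le C \lambda_0^n$ and the existence of at least one point $y_n \in J(\map)$ with $\sL_\rpot^n(\one)(y_n) \ge c \lambda_0^n$. To promote the lower bound to a uniform one, I would use that for TCE rational maps some iterate $\map^m$ is uniformly topologically exact on $J(\map)$: there exists $m \ge 1$ such that $\map^m(B(z, r_0/2) \cap J(\map)) \supseteq J(\map)$ for every $z \in J(\map)$. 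Applied to a neighborhood of $y_n$, this provides, for each $x \in J(\map)$, a preimage $z \in \map^{-m}(x)$ with $d(z, y_n) < r_0/2$; plugging into the identity $\sL_\rpot^{n+m}(\one)(x) \ge \deg_{\map^m}(z) \exp(S_m\rpot(z)) \sL_\rpot^n(\one)(z)$ and reindexing $n \mapsto n-m$ produces the uniform lower bound.

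It remains to identify $\lambda_0$ with $\exp(\pressure(\map, \rpot))$. One route is to invoke the equality between tree pressure and pressure from~\cite{PrzRivSmi04}: for the tree sum $\sum_{y \in \map^{-n}(x)} \exp(S_n \rpot(y))$, the multiplicities $\deg_{\map^n}(y)$ present in $\sL_\rpot^n(\one)(x)$ grow subexponentially for TCE maps and can be absorbed into the constant. A more direct route uses Theorem~\ref{t:equilibria}: writing the unique equilibrium state $\mu_\rpot$ in terms of $\eta$ and applying Rokhlin's formula together with the fact that $\eta$ is $\lambda_0 \exp(-\rpot)$\nobreakdash-conformal (Lemma~\ref{l:conformal measures as engienvectors}.2) gives $h_{\mu_\rpot}(\map) + \int \rpot \, d\mu_\rpot = \log \lambda_0$, hence $\log \lambda_0 = \pressure(\map, \rpot)$. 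I expect the main obstacle to be precisely this identification together with the uniform exactness step: both genuinely use TCE, and care must be taken to avoid circularity with the Ruelle-Perron-Frobenius theorem of Appendix~\ref{s:Ruelle-Perron-Frobenius}, since that appendix cites the present lemma.
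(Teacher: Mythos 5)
Your bounded-distortion step, the normalization $\int \sL_\rpot^n(\one)\,d\eta=\lambda_0^n$ coming from Lemma~\ref{l:conformal measures as engienvectors}, and the promotion of the lower bound via (uniform) topological exactness are all sound, and they reproduce what the paper does in its Parts~1 and~3: together they give $\sL_\rpot^n(\one)(x)\asymp\lambda_0^n$ uniformly on $J(\map)$. The genuine gap is the final identification $\log\lambda_0=\pressure(\map,\rpot)$, for which neither of your two routes works as stated. Route~1: \cite{PrzRivSmi04} proves equality of tree pressure and variational pressure for the geometric potentials $-t\log|\map'|$, not for an arbitrary H\"older potential $\rpot$; the paper's ``compare with \cite{PrzRivSmi04}'' signals that Lemma~\ref{l:tree pressure} is the H\"older analogue being proved here, so you cannot cite it. Moreover, even granting subexponential multiplicities (which indeed handles the inequality $\limsup\frac1n\log\sL_\rpot^n(\one)\le \pressure(\map,\rpot)$), you still need the opposite inequality, i.e.\ that preimage sums dominate sums over $(n,\varepsilon)$-separated sets; this is where \ExpShrink{} enters (each connected component of $\map^{-(n+n_0)}(B)$ has diameter $\le\varepsilon$, hence contains at most one separated point), and your sketch does not supply it.

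Route~2 has a different problem: uniqueness of the equilibrium state (Theorem~\ref{t:equilibria}) by itself does not let you ``write $\mu_\rpot$ in terms of $\eta$''. To apply Rokhlin's formula and conclude $h_{\mu_\rpot}(\map)+\int\rpot\,d\mu_\rpot=\log\lambda_0$ you need $\mu_\rpot\ll\eta$ with a density bounded away from $0$ and $\infty$ (so that $\Jac_{\mu_\rpot}$ and the conformal Jacobian $\lambda_0\exp(-\rpot)$ differ by the coboundary $h_0\circ\map/h_0$); that is precisely the Ruelle--Perron--Frobenius statement of Theorem~\ref{t:Ruelle-Perron-Frobenius}, whose proof in Appendix~\ref{s:Ruelle-Perron-Frobenius} uses the present lemma, so this route is circular in the intended logical order (and the main-text proof of Theorem~\ref{t:equilibria} via \cite{Dob0804} gives uniqueness, not the absolutely continuous representation you invoke). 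The paper avoids both issues by proving directly, from the definition of pressure via $(n,\varepsilon)$-separated sets, that $\lim_{n\to+\infty}\frac1n\log\sL_\rpot^n(\one)(x)=\pressure(\map,\rpot)$: the lower bound uses \ExpShrink{} as above, and the upper bound partitions $\map^{-n}(x_0)$ into at most $N_n$ separated sets, where the $(n,\varepsilon)$-multiplicity $N_n$ grows subexponentially because critical points in $J(\map)$ are non-periodic. To close your argument you should include this direct comparison (or an equivalent substitute) rather than rely on either citation.
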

\begin{proof}
Let $\lambda_{\ES} > 1$ and $r_0 > 0$ be as in the \ExpShrink{} condition.
Let $\kappa \in (0, 1)$ be the exponent of~$\rpot$.
We will use the following fact several times:
If $x, x' \in J(\map)$ belong to a ball~$B$ of radius less than or equal to~$r_0$
centered at $J(\map)$, then for every $y \in \map^{-n}(x)$ and $y' \in
\map^{-n}(x')$ in the same connected component of~$\map^{-n}(B)$, we have
$$
|S_n(\rpot)(y) - S_n(\rpot)(y')|
\le
|\rpot|_{\kappa} (2r_0)^\kappa (\lambda_{\ES}^\kappa - 1)^{-1}.
$$
So, if we put $C_1 \= \exp(|\rpot|_{\kappa}(2r_0)^\kappa (\lambda_{\ES}^\kappa -
1)^{-1})$, then we have
$$
C_1^{-1} \le \sL^n_\rpot(\one)(x) / \sL^n_\rpot(\one)(x') \le
C_1.
$$

Let~$\sU$ be a finite covering of~$J(\map)$ by balls of radius $r_0$
centered at~$J(\map)$.

\partn{1}
We will show that there is a constant~$C_0 > 1$ so that for every integer~$n \ge 1$, and every $x, x' \in J(\map)$ we have
$$
C_0^{-1} \le \sL_\rpot^n(\one)(x) / \sL_\rpot^n(\one)(x') \le
C_0.
$$

By the locally eventually onto property of~$\map$ on~$J(\map)$, there is a positive integer~$n_0$ such that for every $B \in \sU$ we
have $J(\map) \subset \map^{n_0}(B)$.
We will show that for each $n \ge n_0$ and $x \in J(\map)$ we have
\begin{multline*}
C_1^{-1} \left( \sup_{J(\map)} \sL_\rpot^{n - n_0}(\one) \right) \left( \inf_{J(\map)} \exp(\rpot) \right)^{n_0}
\le
\sL_\rpot^n(\one)(x)
\\ \le
\deg(\map)^{n_0} \left( \sup_{J(\map)} \sL_\rpot^{n - n_0}(\one) \right) \left( \sup_{J(\map)} \exp(\rpot) \right)^{n_0}.
\end{multline*}
The desired assertion follows easily from these inequalities.

The second inequality is an easy consequence of the formula,
$$
\sL_\rpot^n(\one)(x)
=
\sum_{y \in \map^{-n_0}(x)} \deg_{\map^{n_0}}(y) \exp(S_{n_0}(\rpot)(y)) \sL_\rpot^{n - n_0}(\one)(y).
$$
and from the fact that $\# \left(\map^{-n_0}(x)\right) \le \deg(\map)^{n_0}$.
To prove the first inequality, let $y_0 \in J(\map)$ be such that
$$ \sL_\rpot^{n - n_0}(\one) (y_0) = \sup_{J(\map)} \sL_\rpot^{n - n_0}(\one). $$
Furthermore, let $B \in \sU$ containing~$y_0$, and let $y \in B$ be such that $\map^{n_0}(y) = x$.
Then we have
$$
\sL_\rpot^n(\one)(x)
\ge
\exp(S_{n_0}(\rpot)(y)) \sL_\rpot^{n - n_0}(\one) (y)
\ge
\left( \inf_{J(\map)} \exp(\rpot) \right)^{n_0} C_1^{-1} \sL_\rpot^{n - n_0}(\one) (y_0).
$$

\partn{2}
We will prove that for each~$x \in J(\map)$ we have
$$ \lim_{n \to + \infty} \frac{1}{n} \log \sL_{\rpot}^n \one (x) = \pressure(\map, \rpot). $$

Given $\delta > 0$ let~$\varepsilon > 0$ and $n_0 \ge 1$ be such that for each~$n \ge n_0$ there is a $(n, \varepsilon)$\nobreakdash-separated set~$\sN$
such that
$$
\sum_{y \in \sN} \exp(S_n(\rpot(y))) \ge \exp(n(\pressure (\map, \rpot) -
\delta)).
$$
Taking~$n_0$ larger if necessary, we assume that $\lambda_{\ES}^{n_0} \le \varepsilon$.

Fix~$n \ge n_0$, let~$\sN$ be as above, and let $B \in \sU$ be such that the set $\sN_B \= \{ y \in \sN \mid \map^{n + n_0}(y) \in B \}$ satisfies
$$
\sum_{y \in \sN_B} \exp(S_n(\rpot)(y))
\ge
\frac{1}{\# \sU} \exp(n( \pressure (\map, \rpot) - \delta)).
$$
Since for each $m = n_0, n_0 + 1, \ldots, n$ the diameter of each connected component of~$\map^{-m}(B)$ is less than or equal to $\lambda_{\ES}^{m} \le \varepsilon$, it follows that each connected component of~$\map^{-(n + n_0)}(B)$ can contain at most one element of~$\sN$.
Therefore for each $x \in B \cap J(\map)$ we have
\begin{multline*}
\sL_\rpot^{n + n_0}(\one)(x)
\ge
C_1^{-1} \left( \inf_{J(\map)} \exp(\rpot) \right)^{n_0} \sum_{y \in \sN_B} \exp(S_n(\rpot)(y))
\\ \ge
C_1^{-1} \left( \inf_{J(\map)} \exp(\rpot) \right)^{n_0} \frac{1}{\# \sU} \exp(n(\pressure(\map, \rpot) - \delta)).
\end{multline*}
Together with part~1 this implies that for each $x' \in J(\map)$ we have
$$
\liminf_{n \to + \infty} \frac{1}{n} \log \sL_\rpot^n(\one)(x')
\ge
\pressure(\map, \rpot) - \delta.
$$
Since $\delta > 0$ was arbitrary, this shows that for each  $x' \in J(\map)$ we have
$$
\liminf_{n \to + \infty} \frac{1}{n} \log \sL_\rpot^n(\one)(x')
\ge
\pressure(\map, \rpot).
$$

It remains to prove that for each $x \in J(\map)$ we have,
\begin{equation}\label{e:lower pressure}
\limsup_{n \to + \infty} \frac{1}{n} \log \sL_\rpot^n(\one)(x) \le
\pressure(\map, \rpot).
\end{equation}
Let $\varepsilon > 0$ be given. For each $n \ge 1$ and $y_0 \in
J(\map)$ denote by $N_n(y_0)$ the number of points in
$\map^{-n}(\map^n(y_0))$, counted with multiplicity, that are $(n,
\varepsilon)$\nobreakdash-close to~$y_0$, and put $N_n \= \sup_{y_0
\in J(\map)} N_n(y_0)$. Then, for every $n \ge 1$ and $x_0 \in J(\map)$
the set $\map^{-n}(x_0)$ can be partitioned into at most~$N_n$ sets,
each of which is $(n, \varepsilon)$\nobreakdash-separated. It
follows that $\map^{-n}(x_0)$ contains a subset~$\sN$ that is $(n,
\varepsilon)$\nobreakdash-separated and such that
$$
\sum_{y \in \sN} \exp(S_n(\rpot(y)))
\ge
\frac{1}{N_n} \sL_\rpot^n(\one)(x_0).
$$
Thus, to prove inequality~\eqref{e:lower pressure} it is enough to
prove that $\limsup_{n \to + \infty} \tfrac{1}{n} \log N_n$ can be
made arbitrarily small by taking~$\varepsilon$ sufficiently small.

Let $L \ge 1$ be given. As none of the critical points of~$\map$ in
$J(\map)$ is periodic, there is $\varepsilon > 0$ such that for every
$c \in \Crit(\map) \cap J(\map)$, $x \in B(c, 2\varepsilon)$ and $j \in
\{1, \ldots, L \}$ we have $\map^j(x) \not\in B(c, 2\varepsilon)$.
Reducing~$\varepsilon$ if necessary we assume that for every $x \in
J(\map)$ such that $\dist(x, \Crit(\map) \cap J(\map)) \ge 2\varepsilon$, the
map $\map$ is injective on~$B(x, \varepsilon)$.

For each $y \in J(\map)$ put $N_0(y) = 1$. Note that if $y_0 \in J(\map)$
and $y \in \map^{-n}(\map^n(y_0))$ are $(n,
\varepsilon)$\nobreakdash-close, then~$\map(y)$ and $\map(y_0)$ are $(n -
1, \varepsilon)$\nobreakdash-close. So we have $N_n(y_0) \le \deg(\map)
N_n(\map(y_0))$, and when~$\map$ is injective on $B(y_0, \varepsilon)$ we
have $N_n(y_0) = N_{n - 1}(\map(y_0))$. In particular we have $N_n(y_0)
= N_{n - 1}(\map(y_0))$ when $\dist(y_0, \Crit(\map) \cap J(\map)) \ge
2\varepsilon$. By induction and the definition of~$L$ we obtain that
$$
N_n(y_0) \le \deg(\map)^{\#(\Crit(\map) \cap J(\map))(1 + n/L)},
$$
and that
$$
\limsup_{n \to + \infty} \tfrac{1}{n} N_n(y_0) \le L^{-1} \#
(\Crit(\map) \cap J(\map)) \log \deg(\map).
$$
As we can take~$L$ arbitrarily large by taking $\varepsilon > 0$
sufficiently close to~$0$, this completes the proof of the desired assertion.

\partn{3}
We will complete the proof of the lemma.
By part~1 of Lemma~\ref{l:conformal measures as engienvectors} there is $\lambda > 1$ and a probability measure~$\eta$ such  that $\sL_\rpot^*(\eta) = \lambda \eta$.
Then for every integer $n \ge 1$ we have
\[
\int \sL_\rpot^n(\one) d\eta = \int \one d (\sL_\rpot^*)^n
(\eta) = \lambda^n.
\]
Thus, by part~1 we have that for every $x \in J(\map)$,
$$
C_0^{-1} \la^n \le \sL_\rpot^n(\one)(x) \le C_0 \la^n.
$$
Part~2 implies then that $\lambda = \exp(\pressure(\map, \rpot))$.
\end{proof}
\begin{lemm}\label{l:periodic points}
Let~$\map$ be a rational map satisfying the \TCEC, and for each integer $n \ge 1$ put $\Per_n = \{ p \in J(\map) \mid \map^n(p) = p \}$.
Then for every H\"older continuous function~$\rpot : J(\map) \to \R$ we have
$$
\lim_{n \to + \infty} \frac{1}{n} \log \sum_{p \in \Per_n} \exp(S_n(\rpot)(p))
=
\pressure(\map, \rpot).
$$
\end{lemm}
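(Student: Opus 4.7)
The plan is to prove matching upper and lower bounds for $\tfrac{1}{n}\log\sum_{p\in\Per_n}\exp(S_n(\rpot)(p))$, each converging to $\pressure(\mapa,\rpot)$. The main tools are the \ExpShrink{} reformulation of TCE (with constants $\la_{\ES}>1$ and $r_0>0$), the H\"older-plus-ExpShrink bounded distortion of Birkhoff sums along pullback components (the same geometric-series argument used at the start of the proof of Lemma~\ref{l:tree pressure}), and Lemma~\ref{l:tree pressure} itself, which already gives $\sL_\rpot^n(\one)(x) \asymp \exp(n\pressure(\mapa,\rpot))$ uniformly in $x\in J(\mapa)$.

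For the upper bound I would first show that $\Per_n$ is $(n,\varepsilon)$\nobreakdash-separated for some fixed $\varepsilon>0$ and all $n$ large. This is a standard closing-type statement in the ExpShrink setting: TCE forbids critical points on any periodic orbit, so the inverse branches of $\mapa^n$ along the orbit of any $p\in\Per_n$ are well defined, and if $q\in\Per_n$ satisfied $d_n(p,q)\leq\varepsilon$, these branches applied to $B(p,r_0)$ would force $q$ into the pullback component containing $p$, where the contraction pins down a single fixed point. The $(n,\varepsilon)$\nobreakdash-separated-set characterization of pressure recalled in~\S\ref{ss:thermodynamic formalism} then yields $\limsup_{n\to\infty}\tfrac{1}{n}\log\sum_{p\in\Per_n}\exp(S_n(\rpot)(p)) \leq \pressure(\mapa,\rpot)$.

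For the lower bound, fix $x_0\in J(\mapa)$ whose full forward orbit avoids $\CJ$; such $x_0$ exist since that orbit is countable and $J(\mapa)$ is uncountable. Then $\mapa^n$ is unramified at every preimage of $x_0$. For each $y\in\mapa^{-n}(x_0)$ the inverse branch $\phi_y$ of $\mapa^n$ with $\phi_y(x_0)=y$ is well defined on $B(x_0,r_0)$ with image of diameter $\leq\la_{\ES}^{-n}$. Once $n$ is large enough that $\la_{\ES}^{-n}<r_0/2$, every $y\in\mapa^{-n}(x_0)\cap B(x_0,r_0/2)$ yields a $\phi_y$ mapping $B(x_0,r_0)$ into itself, producing by the Schwarz lemma a unique fixed point $p_y\in\Per_n$; distinct $y$ lie in disjoint pullback components and so give distinct $p_y$. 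H\"older distortion then provides
\[
\sum_{p\in\Per_n}\exp(S_n(\rpot)(p)) \;\geq\; e^{-C}\,\sL_\rpot^n\!\left(\one_{B(x_0,r_0/2)}\right)(x_0).
\]
It remains to show $\sL_\rpot^n(\one_{B(x_0,r_0/2)})(x_0) \gtrsim \exp(n\pressure(\mapa,\rpot))$. Using a conformal measure $\eta$ furnished by Lemma~\ref{l:conformal measures as engienvectors} one has the identity $\int\sL_\rpot^n(\one_{B(x_0,r_0/2)})\,d\eta = \exp(n\pressure(\mapa,\rpot))\,\eta(B(x_0,r_0/2))$ with $\eta(B(x_0,r_0/2))>0$, and this integral bound can be transferred to a pointwise bound at $x_0$ by a component-by-component distortion argument in the spirit of Lemma~\ref{l:tree pressure}.

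The main technical obstacle is this final step: converting the integral lower bound for $\sL_\rpot^n(\one_B)$ into a pointwise lower bound at $x_0$. Lemma~\ref{l:tree pressure} controls $\sL_\rpot^n(\one)$ uniformly, but for the truncated indicator $\one_B$ one must track, component by component of pullbacks of a small ball around a moving base point, whether the chosen preimage lies in $B$. Since \ExpShrink{} forces pullback components to be much smaller than $B$ for large $n$, this dichotomy is essentially stable under small moves of the base point, but producing uniform constants and handling components whose preimage of the base point sits near $\partial B$ is the delicate part of the argument.
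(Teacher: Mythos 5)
Your overall strategy (compare the periodic-point sums with $\sL_\rpot^n(\one)$ via \ExpShrink{} plus H\"older distortion, then invoke Lemma~\ref{l:tree pressure}) is the right one, but both halves rest on steps that are not justified and, as stated, fail. For the upper bound, everything hinges on the claim that $\Per_n$ is $(n,\varepsilon)$-separated for a fixed $\varepsilon>0$, and your justification does not hold up: for a TCE map with critical points in $J(\map)$, points of $\Crit(\map)\cap J(\map)$ can come arbitrarily close to periodic orbits, so the inverse branches of $\map^n$ along the orbit of $p\in\Per_n$ need not be defined on balls of the definite radius $r_0$ (the pullback component of $B(p,r_0)$ containing $p$ may be ramified), and ``closeness forces $q$ into the pullback component containing $p$'' is exactly the delicate point, since two distinct components of a preimage can be arbitrarily close to each other near a critical point; the needed closing/backward-stability statement is neither obvious nor proved. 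The paper sidesteps separation entirely: it takes a finite $(r_0/3)$-dense set $F\subset J(\map)$, notes that for $n$ large each $p\in\Per_n$ lies in a component $W$ of $\map^{-n}(B(x,r_0))$ with $x\in F$, $W\subset B(x,2r_0/3)$, and counts the periodic points in $W$ by the degree of $\map^n:W\to B(x,r_0)$, i.e.\ by the number of preimages of $x$ in $W$ counted with multiplicity; distortion then gives $\sum_{p\in\Per_n}\exp(S_n(\rpot)(p))\le e^{C}\sum_{x\in F}\sL_\rpot^n(\one)(x)$ with no separation claim at all.

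For the lower bound there are two problems. First, choosing $x_0$ off the critical orbits makes $\map^n$ unramified \emph{at} each $y\in\map^{-n}(x_0)$, but it does not make the inverse branch $\phi_y$ single-valued on all of $B(x_0,r_0)$, nor is it true that distinct preimages lie in distinct pullback components: a component may contain critical points of $\map^n$ and several preimages of $x_0$. This part is repairable by replacing the Schwarz-lemma argument with a degree count (a component $W_0$ compactly contained in $B(x_0,r_0)$ contains exactly $\deg(\map^n:W_0\to B(x_0,r_0))$ points of $\Per_n$ with multiplicity). Second, and more seriously, the final step you flag as delicate is a genuine missing piece: the conformal-measure identity only controls $\int\sL_\rpot^n(\one_{B})\,d\eta$, and transferring it to a pointwise bound at $x_0$ for the truncated observable $\one_{B}$ is not covered by the distortion argument of Lemma~\ref{l:tree pressure}, precisely because matched preimages in a pullback component can disagree about membership in $B$ near $\partial B$. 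The paper's proof is structured to avoid any truncated estimate: fix $m_0$ with $J(\map)\subset\map^{m_0}(B(x_0,r_0/3))$; then \emph{every} component $W$ of $\map^{-n}(B(x_0,r_0))$ admits a component $W_0$ of $\map^{-m_0}(W)$ meeting $B(x_0,r_0/3)$, hence $W_0\subset B(x_0,2r_0/3)$, and $W_0$ contains at least $\deg(\map^n:W\to B(x_0,r_0))$ points of $\Per_{n+m_0}$; distortion plus the bounded extra factor $\exp(m_0\sup|\rpot|)$ then yields $\sum_{p\in\Per_{n+m_0}}\exp(S_n(\rpot)(p))\ge (C')^{-1}\sL_\rpot^n(\one)(x_0)$ with the full transfer operator, and the shift from $n$ to $n+m_0$ is harmless in the exponential growth rate. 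Unless you prove your separation claim and the pointwise truncated bound, your argument does not close; adopting the degree-counting and extra-$m_0$-pullback devices is the direct way to fix it.
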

\begin{proof}
  Let $\lambda_{\ES} > 1$ and $r_0 > 0$ be as in the \ExpShrink{} condition, and let $\kappa \in (0, 1)$ be the exponent of~$\rpot$.
Just as in the proof of Lemma~\ref{l:tree pressure} we will use the following fact several times:
Letting $C \= |\rpot|_{\kappa} (2r_0)^\kappa (\lambda_{\ES}^\kappa - 1)^{-1}$, for each $x, x' \in J(\map)$ that belong to a ball~$B$ of radius less than or equal to~$r_0$ centered at $J(\map)$, and for each $y \in \map^{-n}(x)$ and $y' \in
\map^{-n}(x')$ in the same connected component of~$\map^{-n}(B)$, we have $|S_n(\rpot)(y) - S_n(\rpot)(y')| \le C$.

Let $n_0 \ge 1$ be sufficiently large so that $\la_{\ES}^{n_0} < r_0/3$, and fix $n \ge n_0$.

Let~$F$ be a finite subset of~$J(\map)$ that is $(r_0/3)$\nobreakdash-dense in $J(\map)$.
Let $x \in F$ and let~$W$ be a connected component of~$\map^{-n}(B(x, r_0))$ intersecting~$B(x, r_0/3)$.
We have $W \subset B(x, 2r_0/3)$, so the number of elements of~$\Per_n$ contained in~$W$ is the same as the number of elements of~$\map^{-n}(x_0)$ in~$W$, counted with multiplicity.
Considering that each element of~$\Per_n$ is contained in such a~$W$, we conclude that
$$
\sum_{p \in \Per_{n}} \exp(S_n(\rpot)(p))
\le
\exp(C) \sum_{x \in F} \sL_{\rpot}^n (\one) (x). $$
Then Lemma~\ref{l:tree pressure} implies that
$$
\limsup_{n \to + \infty} \frac{1}{n} \log \sum_{p \in \Per_n} \exp(S_n(\rpot)(p))
\le
\pressure(\map, \rpot).
$$

Fix $x_0 \in J(\map)$ and let~$m_0 \ge 1$ be sufficiently large so that $J(\map) \subset \map^{m_0}(B(x_0, r_0/3))$.
Let~$W$ be a connected component of~$\map^{-n}(B(x_0, r_0))$.
Then there is a connected component~$W_0$ of~$\map^{-m_0}(W)$ intersecting~$B(x_0, r_0/3)$.
Since~$W_0$ is a connected component of~$\map^{- (n + m_0)}(B(x_0, r_0))$, we have $W_0 \subset B(x_0, 2r_0/3)$.
So, if we denote by~$D_0$ the degree of~$\map^{n + m_0} : W_0 \to B(x_0, r_0)$, then~$W_0$ contains precisely~$D_0$ elements of~$\Per_{n + m_0}$.
Since the degree of~$\map^n : W \to B(x_0, r_0)$ is less than or equal to~$D_0$, letting $C' = \exp \left(C +  m_0 \sup_{\CC} |\rpot| \right)$, we have
\begin{equation*}
\sum_{x \in W \cap \map^{-n}(x_0)} \deg_\map(x) \exp(S_n(\rpot))(x)
\le
C' \sum_{p \in W_0 \cap \Per_{n + m_0}} \exp(S_n(\rpot)(p)).
\end{equation*}
We thus have
$$
\sum_{p \in \Per_{n + m_0}} \exp(S_n(\rpot)(p))
\ge
(C')^{-1}\sL_{\rpot}^n (\one) (x_0), $$
and Lemma~\ref{l:tree pressure} implies that
$$
\liminf_{n \to + \infty} \frac{1}{n} \log \sum_{p \in \Per_n} \exp(S_n(\rpot)(p))
\ge
\pressure(\map, \rpot).
$$
\end{proof}
\begin{lemm}\label{l:Birkhoff averages}
Let~$\map$ be a complex rational map satisfying the \TCEC, let $\rpot : J(\map) \to \R$ be a H\"older continuous function, and let $\mu_\rpot$ be the unique equilibrium state of~$\map$ for the potential~$\rpot$.
Then for every H\"older continuous function $\gpot : J(\map) \to \R$ we have
$$ \lim_{n \to + \infty} \frac{1}{n} \log \int \exp (S_n(\gpot)) d \mu_{\rpot}
=
\pressure(\map, \rpot + \gpot) - \pressure(\map, \rpot). $$
\end{lemm}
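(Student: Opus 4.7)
The plan is to exploit the duality between the transfer operator and its adjoint on conformal measures, combined with the uniform comparison of $\sL_\rpot^n(\one)$ to $\exp(nP(\map,\rpot))$ supplied by Lemma~\ref{l:tree pressure}.

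First, by part~1 of Lemma~\ref{l:conformal measures as engienvectors}, together with the final part of the proof of Lemma~\ref{l:tree pressure}, there exists a Borel probability measure $\eta_\rpot$ on $J(\map)$ with $\sL_\rpot^*(\eta_\rpot) = e^{P(\map,\rpot)} \eta_\rpot$. The key algebraic identity is the telescoping
\[
\sL_\rpot^n\!\left(\exp(S_n\gpot)\right)(x) = \sL_{\rpot+\gpot}^n(\one)(x),
\]
which follows from a direct expansion using $S_n\gpot(y) = \gpot(y)+\gpot(\map y)+\cdots+\gpot(\map^{n-1}y)$ and the multiplicativity of the weights along preimage trees. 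Integrating against $\eta_\rpot$ and iterating the eigenvalue equation gives
\[
e^{nP(\map,\rpot)} \int \exp(S_n\gpot)\,d\eta_\rpot = \int \sL_{\rpot+\gpot}^n(\one)\,d\eta_\rpot.
\]

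Second, since $\rpot+\gpot$ is H\"older continuous and $\map$ satisfies the TCE condition, Lemma~\ref{l:tree pressure} applies to the potential $\rpot+\gpot$ and produces a constant $C>0$ with
\[
C^{-1}\exp(nP(\map,\rpot+\gpot)) \le \sL_{\rpot+\gpot}^n(\one)(x) \le C\exp(nP(\map,\rpot+\gpot))
\]
uniformly in $x\in J(\map)$ and $n\ge 1$. Combining with the identity above,
\[
\lim_{n\to+\infty} \frac{1}{n}\log \int \exp(S_n\gpot)\,d\eta_\rpot = P(\map,\rpot+\gpot) - P(\map,\rpot).
\]

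Third, I would transfer this asymptotic from $\eta_\rpot$ to $\mu_\rpot$. The Ruelle-Perron-Frobenius type theorem (Theorem~\ref{t:Ruelle-Perron-Frobenius}) provides a continuous density $h_\rpot$ with $\mu_\rpot = h_\rpot \eta_\rpot$ and $h_\rpot$ bounded above and uniformly bounded away from zero on $J(\map)$; consequently the integrals $\int\exp(S_n\gpot)\,d\mu_\rpot$ and $\int\exp(S_n\gpot)\,d\eta_\rpot$ differ by a factor confined to a compact subinterval of $(0,+\infty)$ independent of $n$, so the $\tfrac1n\log$ limit is the same. The main obstacle is precisely this last step: everything downstream depends on knowing that the equilibrium state is absolutely continuous with bounded density with respect to some conformal measure, a non-trivial output of the RPF machinery in Appendix~\ref{s:Ruelle-Perron-Frobenius} that rests, in turn, on the TCE condition (through the \ExpShrink{} property) to control distortion uniformly on preimage trees.
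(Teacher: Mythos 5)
Your proposal is correct and follows essentially the same route as the paper's proof: reduce to the conformal measure $\eta_\rpot$ via the bounded density $h_\rpot$, use the eigenvalue equation $\sL_\rpot^{*n}\eta_\rpot = e^{nP(\map,\rpot)}\eta_\rpot$ together with the identity $\sL_\rpot^n(\exp(S_n(\gpot))) = \sL_{\rpot+\gpot}^n\one$, and conclude by applying Lemma~\ref{l:tree pressure} to the H\"older potential $\rpot+\gpot$. The only difference is the order in which you pass between $\mu_\rpot$ and $\eta_\rpot$, which is immaterial.
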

\begin{proof}
Let~$\eta_\rpot$ be the $(\rpot - \pressure(\map, \rpot))$\nobreakdash-conformal measure of~$\map$, and let~$h_\rpot$ be the Radon-Nikodym derivative of~$\mu_\rpot$ with respect to~$\eta_\rpot$.
Since $\inf h_\rpot > 0$ and $\sup h_\rpot < + \infty$, it is enough to prove the limit with~$\mu_\rpot$ replaced by~$\eta_\rpot$.

For each integer $n \ge 1$ we have
\begin{multline*}
  \int \exp (S_n(\gpot)) d\eta_{\rpot}
=
\int \exp (S_n(\gpot)) d \left( \exp (-n P(\map, \rpot)) \sL_{\rpot}^{*n}\eta_{\rpot} \right)
\\ =
\exp (- n P(\map, \rpot)) \int \sL_{\rpot}^n \left( \exp (S_n(\gpot)) \right) d \eta_{\rpot}
\end{multline*}
Using $\sL_{\rpot}^n \left( \exp (S_n(\gpot)) \right) = \sL_{\rpot + \gpot}^n \one$, the assertion of the proposition is then a direct consequence of Lemma~\ref{l:tree pressure}.
\end{proof}

\subsection{Proof of Theorem~\ref{t:LDP TCE} and its corollaries}\label{ss:proof of LDP TCE}
\begin{proof}[Proof of Theorem~\ref{t:LDP TCE}]
First recall that the topological entropy of~$\map$ is equal to $\log \deg(\map)$ \cite{Gro03,Lju83}, and that the measure-theoretic entropy of~$\map$ is upper semi-continuous~\cite{FreLopMan83,Lju83}.
Fix a H\"older continuous function $\gpot : J(\map) \to \R$.
For the sequence~$(\ldpm_n)_{n \ge 1}$ associated to periodic points we have,
\begin{multline*}
\int_{\mjulia} \exp \left( n \int \gpot d\mu \right) d \ldpm_n(\mu)
\\ =
\frac{\sum_{p \in \Per_n} \exp(S_n(\rpot)(p)) \exp \left( n \int \gpot d\average_n(p) \right)}{\sum_{p' \in \Per_n} \exp(S_n(\rpot)(p'))}
\\ =
\frac{\sum_{p \in \Per_n} \exp(S_n(\rpot + \gpot)(p))}{\sum_{p' \in \Per_n} \exp(S_n(\rpot)(p'))}.
\end{multline*}
Analogously, for the sequence~$(\ldpm_n(x_0))_{n \ge 1}$ associated to the iterated preimages of a point $x_0 \in J(\map)$, we have
\begin{equation*}
\int_{\mjulia} \exp \left( n \int \gpot d\mu \right) d \ldpm_n(x_0)(\mu)
=
\frac{\sum_{x \in \map^{-n}(x_0)} \exp(S_n(\rpot + \gpot)(x))}{\sum_{y \in \map^{-n}(x_0)} \exp(S_n(\rpot)(y))}.
\end{equation*}
Finally, for the sequence~$(\ldpmb_n)_{n \ge 1}$ associated to the Birkhoff averages we have,
\begin{equation*}
\int_{\mjulia} \exp \left( n \int \gpot d\mu \right) d \ldpmb_n(\mu)
=
\int_{J(\map)} \exp (S_n(\gpot)) d \mu_{\rpot}.
\end{equation*}
Therefore, \eqref{e:functional equality}  holds with~$\gpot$ for the sequences $(\ldpm_n)_{n \ge 1}$, $(\ldpm_n(x_0))_{n \ge 1}$, and $(\ldpmb_n)_{n \ge 1}$, by Lemma~\ref{l:periodic points}, Lemma~\ref{l:tree pressure}, and Lemma~\ref{l:Birkhoff averages}, respectively.
Consequently,  all the assertions  of Theorem~\ref{t:LDP TCE} follow from Theorem~\ref{t:equilibria} and   Theorem~\ref{t:LDP}.
\end{proof}

\begin{proof}[Proof of Corollary~\ref{c:contracted LDP}]
The first assertion  is obtained from Theorem~\ref{t:LDP TCE} applying  the contraction principle  with the map $\widehat{\psi}$ (see \S~\ref{s:preliminaries}).
For each $\delta>0$ put
$$ \sG_{1,\delta}
=
\left\{ \mu \in \mjulia : \int \gpot d\mu > \delta \right\}
$$
and
$$
\sG_{2,\delta}
=
\left\{ \mu \in \mjulia : \int \gpot d\mu < -\delta \right\}.
$$
If there exists some $\delta_0>0$ such that  $(\sG_{1,\delta_0}\cup\sG_{2,\delta_0})\cap\imjulia\neq\emptyset$, then~\eqref{c:contracted LDP-eq2}  follows from the last statement of Theorem~\ref{t:LDP TCE} for all $\varepsilon \in (0, \delta_0]$.
Moreover, the value of~\eqref{c:contracted LDP-eq2} is strictly negative since by hypothesis $\mu_\varphi\not\in\overline{\sG_{1,\varepsilon}\cup\sG_{2,\varepsilon}}$.
Assume now that for all $\delta > 0$ we have
$(\sG_{1,\delta}\cup\sG_{2,\delta})\cap\imjulia=\emptyset$.
Since for each~$\delta > 0$ and~$j \in \{1, 2 \}$ we have $\overline{\sG_{j, 2 \delta}} \subset \sG_{j, \delta}$, we obtain
\[\overline{\sG_{1,\delta}\cup \sG_{2,\delta}}\cap\imjulia=\emptyset, \]
 and the conclusion follows from  the large deviation  upper bounds applied to $\overline{\sG_{1,\delta}\cup \sG_{2,\delta}}$ for all $\delta>0$ (so that both sides of~\eqref{c:contracted LDP-eq2} are $-\infty$).
\end{proof}

\begin{proof}[Proof of Corollary~\ref{c:entropy expression}]
The first (resp. second) equality is a direct consequence of the definition of the rate function~\eqref{t:LDP TCE-eq1} together with~\eqref{t:LDP TCE-eq2}, and Lemma~\ref{l:periodic points} (resp. Lemma~\ref{l:tree pressure}).
The last equality follows from~\eqref{t:LDP TCE-eq1} and~\eqref{t:LDP TCE-eq2}.
\end{proof}

\appendix

\section{A Ruelle-Perron-Frobenius type  theorem for TCE rational maps}\label{s:Ruelle-Perron-Frobenius}
The purpose of this appendix is to give an alternative proof of Theorem~\ref{t:equilibria}, as a direct consequence of the following Ruelle-Perron-Frobenius type theorem; compare with~\cite{DenUrb91b,Prz90,DenPrzUrb96}.
\begin{theoalph}\label{t:Ruelle-Perron-Frobenius}
Let~$\map$ be a rational map satisfying the \TCEC{} and let $\rpot : J(\map) \to \R$ be a H\"older continuous function.
Then the following conclusions hold.
\begin{enumerate}
\item[1.]
There is a unique probability measure~$\eta_0$ that is supported on~$J(\map)$ and that satisfies
$$ \sL_{\rpot}^* \eta_0 = \exp(\pressure(\map, \rpot)) \eta_0. $$
More generally, if for some~$\la > 1$ there is a probability measure~$\eta$ supported on~$J(\map)$ and such that $\sL_{\rpot}^* \eta = \la \eta$, then $\la = \exp(\pressure(\map, \rpot))$ and $\eta = \eta_0$.

In particular~$\eta_0$ is the unique $\left(\exp(\pressure(\map, \rpot) - \rpot)\right)$\nobreakdash-conformal probability measure for~$\map$ supported on~$J(\map)$.
\item[2.]
There is a unique H\"older continuous function~$h_0 : J(\map) \to (0, + \infty)$ satisfying
$$ \sL_\rpot h_0 = \exp(\pressure(\map, \rpot)) h_0 \text{ and } \int h_0 d \eta_0 = 1. $$
Furthermore, the probability measure~$h_0 \eta_0$ is invariant by~$\map$ and it is the unique equilibrium state of~$\map$ for the potential~$\rpot$.
\end{enumerate}
\end{theoalph}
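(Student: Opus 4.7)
The plan is to build the Ruelle-Perron-Frobenius package for $\sL_{\rpot}$ adapted to the non-uniformly hyperbolic setting, using the \ExpShrink{} formulation of TCE to compensate for the presence of critical points in $J(\map)$. Write $\overline{P} \= P(\map, \rpot)$ for brevity. First I would observe that existence of a probability measure $\eta_0$ on $J(\map)$ with $\sL_{\rpot}^* \eta_0 = \exp(\overline{P}) \eta_0$ is essentially already in hand: Lemma~\ref{l:conformal measures as engienvectors}(1) provides a probability eigenmeasure with some eigenvalue $\lambda > 0$, and the final paragraph of the proof of Lemma~\ref{l:tree pressure}, in which one integrates the identity $\sL_\rpot^n\one$ against $\eta_0$ to get $\int \sL_\rpot^n \one\, d\eta_0 = \lambda^n$ and compares with the uniform bounds $C_0^{-1} e^{n\overline{P}} \le \sL_\rpot^n \one \le C_0 e^{n\overline{P}}$, forces $\lambda = \exp(\overline{P})$. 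The same comparison shows that every probability eigenmeasure with eigenvalue $\lambda > 1$ must have $\lambda = \exp(\overline{P})$, which reduces the uniqueness statement to uniqueness in the case $\lambda = \exp(\overline{P})$.

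For the eigenfunction I would study the sequence $g_n \= \exp(-n\overline{P}) \sL_{\rpot}^n \one$, which Lemma~\ref{l:tree pressure} traps in $[C_0^{-1}, C_0]$, and by construction satisfies $\int g_n\, d\eta_0 = 1$. The central and hardest step of the proof is establishing that the family $\{g_n\}$ is uniformly H\"older continuous on $J(\map)$. Once this is achieved, Arzel\`a-Ascoli yields a H\"older limit $h_0$ of the Cesaro averages $N^{-1} \sum_{n = 0}^{N - 1} g_n$ along a subsequence, which satisfies $\sL_{\rpot} h_0 = \exp(\overline{P}) h_0$, $\int h_0\, d\eta_0 = 1$, and $h_0 \ge C_0^{-1} > 0$. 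The proof of the H\"older estimate should follow the bounded-distortion template already used in Lemmas~\ref{l:tree pressure} and \ref{l:periodic points}: for nearby $x, x' \in J(\map)$ contained in a ball $B$ of radius at most $r_0$, one pairs $y \in \map^{-n}(x)$ with $y' \in \map^{-n}(x')$ lying in the same connected component of $\map^{-n}(B)$, uses \ExpShrink{} to get $\dist(y, y') \le \lambda_{\ES}^{-n}$, and then exploits the H\"older exponent $\kappa$ of $\rpot$ to control $|S_n(\rpot)(y) - S_n(\rpot)(y')|$ geometrically. The delicate point, and what I expect to be the main obstacle, is controlling components that wrap around critical points of $\map^n$: one needs the finer quantitative consequences of TCE from~\cite{PrzRivSmi03}, in particular polynomial bounds on the number and local degrees of the relevant inverse branches, in order to upgrade the pointwise comparisons of Lemma~\ref{l:tree pressure} to a H\"older modulus of continuity uniform in $n$.

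With $h_0$ and $\eta_0$ in hand, the probability measure $\mu_0 \= h_0 \eta_0$ is $\map$-invariant by the direct computation $\int (f \circ \map)\, h_0\, d\eta_0 = \exp(-\overline{P}) \int \sL_{\rpot}((f \circ \map) h_0)\, d\eta_0 = \exp(-\overline{P}) \int f\, \sL_{\rpot}(h_0)\, d\eta_0 = \int f h_0\, d\eta_0$. Its Jacobian with respect to $\map$ is $J_{\mu_0}(\map) = \exp(\overline{P} - \rpot) \cdot (h_0 \circ \map)/h_0$, so Rokhlin's formula gives $h_{\mu_0}(\map) + \int \rpot\, d\mu_0 = \overline{P}$ and $\mu_0$ is an equilibrium state for $\rpot$. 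For the uniqueness statements, I would introduce the normalized operator $\widetilde{\sL} f \= \exp(-\overline{P}) h_0^{-1} \sL_{\rpot}(h_0 f)$, which satisfies $\widetilde{\sL} \one = \one$ and $\widetilde{\sL}^* \mu_0 = \mu_0$, and establish the mixing estimate $\widetilde{\sL}^n f \to \int f\, d\mu_0$ uniformly on $J(\map)$ for continuous $f$, once again by the \ExpShrink{} pairing argument. This yields exactness of $(\map, \mu_0)$, from which both uniqueness statements follow: the density with respect to $\eta_0$ of any competing probability eigenmeasure $\eta$ of $\sL_\rpot^*$ is $\widetilde{\sL}^*$-invariant and bounded, hence $\mu_0$-almost-surely constant, forcing $\eta = \eta_0$; and similarly any other equilibrium state, whose density with respect to $\eta_0$ must coincide with $h_0$ up to a $\widetilde{\sL}^*$-invariant factor by the Jacobian identity, must equal $\mu_0$.
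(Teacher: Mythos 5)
The constructive half of your plan is essentially the paper's: the eigenvalue is pinned down by integrating $\sL_\rpot^n\one$ against an eigenmeasure and invoking the two-sided bound of Lemma~\ref{l:tree pressure}, the eigenfunction comes from Arzel\`a--Ascoli applied to Ces\`aro averages of $e^{-n\pressure(\map,\rpot)}\sL_\rpot^n\one$, invariance of $h_0\eta_0$ is the same direct computation, and Rokhlin's formula gives that $h_0\eta_0$ is an equilibrium state. However, the step you yourself call central and hardest is left unproven, and you point at the wrong tool: no polynomial bounds on the number or the local degrees of inverse branches are needed. The paper's Lemma~\ref{l:three norm inequality} obtains the uniform H\"older bound purely from the refined \ExpShrink{} estimate of Lemma~\ref{l:expansion} (components of $\map^{-n}(B(x,r))$ have diameter at most $C_0\lambda_{\ES}^{-n}r^{\theta_0}$) together with the observation that each connected component of $\map^{-n}(B(x,r_0))$ contains the same number of preimages of $x$ and of $x'$, counted with multiplicity, which produces a pairing bijection $\iota:\map^{-n}(x)\to\map^{-n}(x')$; critical points cost nothing beyond the loss of exponent from $\alpha$ to $\alpha\theta_0$. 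In a blind proof this estimate must actually be carried out, and as stated your sketch would send you hunting for inputs from~\cite{PrzRivSmi03} that are not required.

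The genuine gaps are in your uniqueness arguments. First, the asserted mixing estimate $\widetilde{\sL}^n f\to\int f\,d\mu_0$ uniformly does not follow ``once again by the \ExpShrink{} pairing argument'': that argument yields equicontinuity of the iterates, not convergence of the whole sequence to a constant; to conclude you need an additional mechanism, for instance the maximum-principle/locally-eventually-onto argument the paper uses in part~2 of its proof to show that every limit eigenfunction is a constant multiple of $h_0$, or a cone-contraction argument. Second, and more seriously, even granting exactness, your deductions of uniqueness presuppose that a competing eigenmeasure of $\sL_\rpot^*$, or a competing equilibrium state, is absolutely continuous with respect to $\eta_0$, so that one may speak of ``its density''; this is not known a priori and is precisely the crux. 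The paper circumvents it twice: uniqueness of the eigenmeasure holds because for every H\"older $\psi$ the averages $\tfrac1n\sum_{k=0}^{n-1}e^{-k\pressure(\map,\rpot)}\sL_\rpot^k\psi$ accumulate only on $\left(\int\psi\,d\eta\right)h_0$, which determines $\int\psi\,d\eta$ on a dense set of test functions; and any ergodic equilibrium state $\rho$ is shown, via Rokhlin's formula (valid because TCE forces positive Lyapunov exponents, hence a finite-entropy generating partition) and the equality case of $\log t\le t-1$, to make $h_0^{-1}\rho$ an $\exp(\pressure(\map,\rpot)-\rpot)$-conformal measure, hence equal to $\eta_0$. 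Your appeal to ``the Jacobian identity'' for an arbitrary competing equilibrium state silently assumes exactly what has to be proved.
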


To prove this result we first consider the following lemma, which is precisely~\cite[Part~1 of Lemma~3.3]{PrzRiv07}.
\begin{lemm}
\label{l:expansion}
Let~$\map$ be a rational map satisfying the \ExpShrink{} condition with constants $\la_{\ES} > 1$ and $r_0 > 0$.
Then there are constants~$\theta_0 \in (0, 1)$ and $C_0 > 0$ such that for each $x \in J(\map)$, each $r \in (0, r_0)$, and each connected component~$W$ of~$\map^{-n}(B(x, r))$, we have
$$ \diam(W) \le C_0 \la_{\ES}^{-n} r^{\theta_0}. $$
\end{lemm}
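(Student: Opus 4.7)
The plan is to refine \ExpShrink{} by combining it with a Koebe-type distortion estimate that exploits another equivalent formulation of TCE, namely uniform boundedness of the degrees of the branches of $\map^n$ along pullbacks of small balls.

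\textbf{First step.} I would embed the pullback of the small ball $B(x,r)$ inside that of the large ball $B(x,r_0)$. Given $x \in J(\map)$, $r \in (0, r_0)$, $n \ge 1$, and a connected component $W$ of $\map^{-n}(B(x, r))$, let $W^*$ denote the unique component of $\map^{-n}(B(x, r_0))$ containing $W$. The \ExpShrink{} condition immediately gives $\diam(W^*) \le \lambda_{\ES}^{-n}$, so the lemma reduces to the scale-invariant H\"older comparison
\[
\diam(W) \le C' \, \diam(W^*) \, (r/r_0)^{\theta_0},
\]
with constants $C' > 0$ and $\theta_0 \in (0, 1)$ depending only on $\map$.

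\textbf{Second step.} The restriction $f := \map^n|_{W^*} : W^* \to B(x, r_0)$ is a proper branched cover of some degree $D$. Here I would invoke a further equivalent formulation of TCE from \cite{PrzRivSmi03}, which asserts that, after possibly shrinking $r_0$, the backward degrees along such branches are uniformly bounded: there exists $D_0 < +\infty$ with $D \le D_0$ for every admissible $x$, $n$, and $W^*$. Modulo this bound, the worst-case H\"older modulus of an inverse branch of $f$ is that of the power map $z \mapsto z^{D_0}$, which pulls $B(0, s)$ back to a disk of radius $s^{1/D_0}$. A Koebe-type distortion estimate, applied after uniformizing $f$ through the Riemann mapping theorem, then yields the inequality above with $\theta_0 = 1/D_0$, and the full lemma follows on setting $C_0 = C' r_0^{-\theta_0}$.

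\textbf{Main obstacle.} The substantive input is the uniform bound on the backward degrees $D$. The \ExpShrink{} condition alone does not produce it, so the proof must appeal to a separate characterization of TCE such as ``Bounded Backward Degree on some ball'' from \cite{PrzRivSmi03}, and then invoke the equivalence of these formulations. Once the degree bound is granted, the rest is routine Koebe distortion control: bounded distortion on the non-critical part contributes the multiplicative constant $C'$, while the possible ramification inside $W^*$ is what forces the H\"older exponent down to $\theta_0 = 1/D_0 < 1$.
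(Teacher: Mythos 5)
The step that fails is the appeal to a uniform bound $D \le D_0$ for the degrees of the proper maps $\map^n : W^* \to B(x, r_0)$. No such bound is available here: a uniform bound on the degree of \emph{every} component of $\map^{-n}(B(x,r_0))$, for all $x \in J(\map)$ and all $n$, is precisely the Carleson--Jones--Yoccoz semi-hyperbolicity condition, which is strictly stronger than the \TCEC{} and is not among the equivalent formulations in \cite{PrzRivSmi03}. The topological formulation of TCE only bounds the criticality of pullbacks along the orbit of each point, and only along a subsequence of times of positive density; it says nothing about arbitrary components at arbitrary times. Concretely, TCE maps may have a recurrent critical point in the Julia set (typical real Collet--Eckmann quadratic polynomials are of this kind), and then the components $W^*$ of $\map^{-n}(B(x,r_0))$ pass through the critical point an unbounded number of times as $n \to +\infty$, so the degrees $D$ are unbounded and your exponent $\theta_0 = 1/D_0$ does not exist. (A lesser issue: $W^*$ need not be simply connected, so the uniformization step via the Riemann mapping theorem would also need repair.) Your first reduction, $\diam(W) \le C' \diam(W^*)(r/r_0)^{\theta_0}$, is a reasonable target, but the input you propose to prove it is unavailable under the stated hypothesis, which is only the \ExpShrink{} condition.

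In fact the lemma is much softer than your scheme suggests; the paper does not prove it but quotes it from \cite[Part~1 of Lemma~3.3]{PrzRiv07}, and it admits an elementary proof using no distortion or degree control. Let $L > \la_{\ES}$ be a Lipschitz constant for $\map$ on $\CC$ in the spherical metric, and given $r \in (0, r_0)$ let $m \ge 0$ be the integer with $L^{-(m+1)} r_0 \le r < L^{-m} r_0$. Since $\map^m(B(x,r)) \subset B(\map^m(x), L^m r) \subset B(\map^m(x), r_0)$, the ball $B(x,r)$ lies in a connected component of $\map^{-m}(B(\map^m(x), r_0))$, so every component $W$ of $\map^{-n}(B(x,r))$ lies in a component of $\map^{-(n+m)}(B(\map^m(x), r_0))$, and the \ExpShrink{} condition gives $\diam(W) \le \la_{\ES}^{-(n+m)}$. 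Putting $\theta_0 = \log \la_{\ES} / \log L \in (0,1)$ and using $L^{-m} \le L r/r_0$, we get $\la_{\ES}^{-m} = (L^{-m})^{\theta_0} \le (L r / r_0)^{\theta_0}$, hence $\diam(W) \le C_0 \la_{\ES}^{-n} r^{\theta_0}$ with $C_0 = (L/r_0)^{\theta_0}$. This uses only \ExpShrink{}, exactly as the statement requires, and avoids the semi-hyperbolicity assumption your argument would implicitly impose.
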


We denote by~$\| \cdot \|_{\infty}$ the supremum norm on the space of real continuous functions defined on~$J(\map)$.
Given $\alpha \in (0, 1]$ we will say that a function~$\rpot : J(\map) \to \R$ is \textit{H\"older continuous with exponent~$\alpha$} if there is a constant $C > 0$ such that for all $x, y \in J(\map)$ we have
$$ |\rpot(x) - \rpot(y)| \le C \dist(x, y)^\alpha. $$
For such a function~$\rpot$ we put
$$ | \rpot |_\alpha = \sup \{ |\rpot(x) - \rpot(y)|\dist(x, y)^{- \alpha}:  x, y \in J(\map) \text{ distinct} \}, $$
and $\| \rpot \|_\alpha = \| \rpot \|_\infty + | \rpot |_\alpha$.
Note that $\| \rpot \|_\alpha$ defines a norm on the space of H\"older continuous functions with  exponent~$\alpha$.
\begin{lemm}
\label{l:three norm inequality}
Let~$\map$ be a rational map satisfying the \ExpShrink{} condition, and let~$\theta_0 \in (0, 1)$ be given by Lemma~\ref{l:expansion}.
Then for every $\alpha \in (0, 1)$, and every H\"older continuous function $\rpot : J(\map) \to \R$ with exponent~$\alpha$, there is a constant $C > 0$ such that for every~$\beta \in (0, \alpha]$, every H\"older continuous function~$\gpot : J(\map) \to \R$ with exponent~$\beta$, and every integer~$n \ge 1$ we have
$$ \| \sL_{\rpot}^n \gpot \|_{\beta \theta_0}
\le
C \exp(n \pressure(\map, \rpot)) \left( \| \gpot \|_\infty + \la_{\ES}^{- n \beta} | \gpot |_{\beta} \right). $$
\end{lemm}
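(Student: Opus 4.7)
The plan is to bound $\|\sL_\rpot^n\gpot\|_\infty$ and the H\"older seminorm $|\sL_\rpot^n\gpot|_{\beta\theta_0}$ separately. For the supremum norm, the pointwise inequality $|\sL_\rpot^n\gpot(x)|\le\|\gpot\|_\infty\sL_\rpot^n\one(x)$ combined with Lemma~\ref{l:tree pressure} immediately gives $\|\sL_\rpot^n\gpot\|_\infty\le C_0\|\gpot\|_\infty\exp(n\pressure(\map,\rpot))$. For the seminorm, fix $x,y\in J(\map)$. The case $\dist(x,y)\ge r_0/2$ is absorbed by the supremum bound at the cost of a constant depending only on $r_0$, $\alpha$ and $\theta_0$, using $\beta\le\alpha$ to bound $\dist(x,y)^{-\beta\theta_0}$ uniformly in $\beta$. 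So assume $\delta:=\dist(x,y)<r_0/2$, set $B:=B(x,2\delta)$ (so that $y\in B$), and decompose $\map^{-n}(B)$ into its connected components $W_1,\dots,W_k$. Lemma~\ref{l:expansion} then yields $\diam(W_i)\le C_0\la_{\ES}^{-n}(2\delta)^{\theta_0}$, and more generally $\diam(\map^j(W_i))\le C_0\la_{\ES}^{-(n-j)}(2\delta)^{\theta_0}$ for $0\le j<n$.

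Each branched cover $\map^n\colon W_i\to B$ has some degree $d_i$, so that $\sum_{x'\in W_i\cap\map^{-n}(x)}\deg_{\map^n}(x')=d_i=\sum_{y'\in W_i\cap\map^{-n}(y)}\deg_{\map^n}(y')$. Fix any $z_i\in W_i$ and write $\sL_\rpot^n\gpot(x)-\sL_\rpot^n\gpot(y)=\sum_iA_i$. The degree-matching above lets us telescope each $A_i$ against the common anchor $z_i$ and express it as a signed sum, over $w\in W_i\cap\map^{-n}(\{x,y\})$, of terms $\pm\deg_{\map^n}(w)\bigl[\exp(S_n\rpot(w))\gpot(w)-\exp(S_n\rpot(z_i))\gpot(z_i)\bigr]$. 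Summing the H\"older bound on $\rpot$ along the orbit pair $(\map^j(w),\map^j(z_i))_{0\le j<n}$ yields $|S_n\rpot(w)-S_n\rpot(z_i)|\le|\rpot|_\alpha C_0^\alpha(2\delta)^{\alpha\theta_0}\sum_{j\ge1}\la_{\ES}^{-j\alpha}$, a bound uniform in $n$; hence $\exp(S_n\rpot(w))$ and $\exp(S_n\rpot(z_i))$ are comparable and differ in absolute value by at most $K_1\exp(S_n\rpot(z_i))\delta^{\alpha\theta_0}$. Simultaneously $\dist(w,z_i)\le\diam(W_i)\le C_0\la_{\ES}^{-n}(2\delta)^{\theta_0}$ gives $|\gpot(w)-\gpot(z_i)|\le|\gpot|_\beta C_0^\beta\la_{\ES}^{-n\beta}(2\delta)^{\beta\theta_0}$. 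Expanding the bracket and combining these two estimates produces $|A_i|\le K_2\,d_i\exp(S_n\rpot(z_i))\bigl[\|\gpot\|_\infty\delta^{\alpha\theta_0}+\la_{\ES}^{-n\beta}|\gpot|_\beta\delta^{\beta\theta_0}\bigr]$ for a constant $K_2$ depending only on $\rpot$, $\alpha$, $\la_{\ES}$, $C_0$ and $r_0$.

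Finally, the same distortion comparison gives $d_i\exp(S_n\rpot(z_i))\le K_3\sum_{x'\in W_i\cap\map^{-n}(x)}\deg_{\map^n}(x')\exp(S_n\rpot(x'))$, so summing over $i$ bounds $\sum_id_i\exp(S_n\rpot(z_i))$ by $K_3\sL_\rpot^n\one(x)\le K_3C_0\exp(n\pressure(\map,\rpot))$ via Lemma~\ref{l:tree pressure}. Dividing through by $\delta^{\beta\theta_0}$ and using $\beta\le\alpha$ (so that $\delta^{(\alpha-\beta)\theta_0}$ is bounded over $\delta<r_0/2$) yields the announced H\"older seminorm estimate. The main obstacle is the careful handling of the two preimage sums in each $W_i$: by anchoring both sums at the common reference $z_i$ and exploiting that both degree-weighted preimage counts over $W_i$ equal $d_i$, critical points of $\map^n$ lying in $W_i$ cause no loss at all. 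The exponent $\theta_0$ in the conclusion is inherited directly from Lemma~\ref{l:expansion} through the choice of pulling back a ball of radius $2\delta$ rather than a ball of the fixed radius $r_0$.
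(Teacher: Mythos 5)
Your proof is correct and follows essentially the same route as the paper's: pull back a small ball around the two points, pair the preimage data component by component, control the Birkhoff sums of $\rpot$ via the exponential shrinking of components (Lemma~\ref{l:expansion}), pick up the factor $\la_{\ES}^{-n\beta}|\gpot|_\beta$ from the H\"older estimate on $\gpot$, and convert $\sL_\rpot^n\one$ into $\exp(n\pressure(\map,\rpot))$ by Lemma~\ref{l:tree pressure}. The only (harmless) divergence is the treatment of critical branching: the paper builds a fiber-to-fiber bijection for points off the forward orbits of the critical points and then extends the inequality by density, whereas you keep the local degrees and telescope against a common anchor $z_i$ in each component, which works directly for all points.
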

\begin{proof}
Let $\la_{\ES}$ and $r_0 > 0$ be as in the \ExpShrink{} condition.

Let~$x, x' \in J(\map)$ be outside of the forward orbits of the critical points of~$\map$, and fix an integer~$n$.
Observe that each connected component of~$\map^{-n}(B(x, r_0))$ contains the same number of elements of~$\map^{-n}(x)$ and of~$\map^{-n}(x')$.
Therefore there is a bijection $\iota : \map^{-n}(x) \to \map^{-n}(x')$ such that for every $y \in \map^{-n}(x)$, both~$y$ and $\iota(y)$ belong to the same connected component of~$\map^{-n}(B(x, r_0))$.
In particular we have
$$ \dist(y, \iota(y)) \le C_0 \lambda_{\ES}^{-n} \dist(x, x')^{\theta_0}. $$
Using Lemma~\ref{l:expansion}, we obtain that for each~$y \in \map^{-n}(x)$ we have
$$ | S_n(\rpot)(y) - S_n(\rpot)(\iota(y)) |
\le
|\rpot|_{\alpha} C_0^{\alpha} (\la_{\ES}^{\alpha} - 1)^{-1} \dist(x, x')^{\theta_0\alpha}. $$
So, if we put $C_1 = |\rpot|_{\alpha} C_0^{\alpha} (\la_{\ES}^{\alpha} - 1)^{-1}$, then we have
$$ | \exp(S_n(\rpot)(y)) - \exp(S_n(\rpot)(y')) |
\le
\exp(C_1 r_0^{\theta_0 \alpha}) C_1 \exp(S_n(\rpot)(y)) \dist(x, x')^{\theta_0\alpha}. $$

Using this inequality we obtain,
\begin{multline*}
|\sL_{\rpot}^n \gpot(x) - \sL_{\rpot}^n \gpot(x')|
\le \\ \le
\left| \sum_{y \in \map^{-n}(x)} (\exp(S_n(\rpot)(y)) - \exp(S_n(\rpot)(\iota(y)))) \gpot(y) \right|
+ \\ +
\left| \sum_{y \in \map^{-n}(x)} \exp(S_n(\rpot)(y)) (\gpot(y) - \gpot(\iota(y))) \right|
\le \\ \le
\sL_{\rpot}^n |\gpot| (x) \exp(C_1 r_0^{\theta_0 \alpha}) C_1 \dist(x, x')^{\theta_0\alpha}
+ \\ +
\sL_{\rpot} \one(x) |\gpot|_{\beta} C_0^{\beta} \lambda_{\ES}^{-n \beta} \dist(x, x')^{\theta_0 \beta}.
\end{multline*}

Since the union of the forward orbits of critical points of~$\map$ is nowhere dense in~$J(\map)$, we conclude that the last inequality holds for every $x, x' \in J(\map)$.
Then the assertion of the lemma is obtained using Lemma~\ref{l:tree pressure}.
\end{proof}

\begin{proof}[Proof of Theorem~\ref{t:Ruelle-Perron-Frobenius}]
Let~$\alpha \in (0, 1)$ be the exponent of~$\rpot$, and let~$\theta_0 \in (0, 1)$ be given by Lemma~\ref{l:expansion}.

\partn{1}
Let~$\gpot : J(\map) \to \R$ be a given H\"older continuous function with exponent~$\alpha$.
For each integer~$n \ge 1$ put
$$ \gpot_n \= \frac{1}{n} \sum_{k = 0}^{n - 1} \exp(-k \pressure(\map, \rpot)) \sL_{\rpot}^k \gpot. $$
Then Lemma~\ref{l:three norm inequality} implies that the sequence $\left( \| \gpot_n \|_{\alpha \theta_0} \right)_{n \ge 1}$ is bounded from above independently of~$n$.
It follows that there is a sequence of positive integers $( n_j )_{j \ge 1}$, such that $( \gpot_{n_j} )_{j \ge 1}$ converges uniformly to a H\"older continuous function~$\gpot_0$ of exponent~$\alpha \theta_0$.
We thus have
$$ \sL_{\rpot} \gpot_0 = \exp(\pressure(\map, \rpot)) \gpot_0. $$

\partn{2}
Denote by~$h_0$ a function~$\gpot_0$ as in part~1 when $\gpot = \one$.
Lemma~\ref{l:tree pressure} implies that~$h_0$ takes values in $[C_0^{-1}, C_0] \subset (0, + \infty)$.

We will show that for every~$\gpot$ and~$\gpot_0$ as in part~1 the function~$\gpot_0 / h_0$ is constant.
Put
$$ C \= \sup \{ \gpot_0(x) h_0(x)^{-1} : x \in J(\map) \}, $$
and let~$X$ be the compact set of those~$x \in J(\map)$ such that $\gpot_0(x) = C h_0(x)$.
Then for $x \in X$ we have
\begin{multline*}
C \exp(\pressure(\map, \rpot)) h_0(x)
=
\exp(\pressure(\map, \rpot)) \gpot_0(x)
= \\ =
\sum_{y \in \map^{-1}(x)} \deg_{\map}(y) \exp(\rpot(y)) \gpot_0(y)
\le
C \sum_{y \in \map^{-1}(x)} \deg_{\map}(y) \exp(\rpot(y)) h_0(y)
= \\ =
C \exp(\pressure(\map, \rpot)) h_0(x),
\end{multline*}
which implies that~$\map^{-1}(X) \subset X$.
Therefore~$\map^{-1}(X) \subset X$, and by the locally eventually onto property of~$\map$ on~$J(\map)$ we have that $X = J(\map)$.
That is, we have $\gpot_0 = C h_0$, as wanted.

\partn{3}
Let~$\la > 0$ and let~$\eta_0$ be a probability measure supported on~$J(\map)$ such that $\sL_{\rpot}^* \eta_0 = \la \eta_0$.
Part~1 of Lemma~\ref{l:conformal measures as engienvectors} guaranties that there is at least one such~$\la$ and~$\eta_0$.
Note that for every integer~$n \ge 1$ we have
$$ \int \sL_\rpot^n \one d \eta_0 = \int \one d \sL_\rpot^* \eta_0 = \la^n, $$
so Lemma~\ref{l:tree pressure} implies that $\la = \exp(\pressure(\map, \rpot))$ and hence that~$\sL_{\rpot}^* \eta_0 = \exp(\pressure(\map, \rpot)) \eta_0$.

Note that for each~$\gpot$ and~$\gpot_0$ as in part~1 we have $\int \gpot_0 d \eta_0 = \int \gpot d \eta_0$.
In particular, letting~$\gpot = \one$, we obtain that~$\int h_0 d\eta_0 = 1$.
If we denote by~$C > 0$ the constant given by part~2, so that~$\gpot_0 = C h_0$, then we have
$$ \int \gpot d\eta_0 = \int \gpot_0 d\eta_0 = \int Ch_0 d\eta_0 = C. $$
That is, we have shown that for each accumulation point~$\gpot_0$ of the sequence of functions~$( \gpot_n )_{n \ge 1}$ defined in part~1, we have~$\gpot_0 = (\int \gpot d\eta_0) h_0$.
As this property determines~$\eta_0$ uniquely, we conclude that~$\eta_0$ is the unique probability measure~$\eta$ that is supported on~$J(\map)$ and for which there is~$\la > 0$ such that $\sL_\rpot^* \eta = \la \eta$.

\partn{4}
To show that the measure~$h_0 \eta_0$ is invariant by~$\map$, observe that for each continuous function~$\gpot : J(\map) \to \R$ we have
\begin{multline*}
\int \gpot d \map[h_0 \eta_0]
=
\int \gpot \circ \map d h_0 \eta_0
\\ =
\int \gpot \circ \map \cdot h_0 d \left(\exp(- \pressure(\map, \rpot)) \sL_{\rpot}^* \eta_0 \right)
\\ =
\exp(- \pressure(\map, \rpot)) \int \sL_{\rpot} \left(\gpot \circ \map \cdot h_0 \right) d \eta_0
\\ =
\exp(- \pressure(\map, \rpot)) \int \gpot \sL_{\rpot} h_0 d \eta_0
=
\int \gpot h_0 d \eta_0.
\end{multline*}

Let~$\rho$ be an ergodic and invariant probability measure supported on~$J(\map)$.
We will show that~$\rho$ is an equilibrium state of~$\map$ for the potential~$\rpot$ if and only if the measure~$\eta \= h_0^{-1}\rho$ is $(\exp(\pressure(\map, \rpot) - \rpot))$-conformal for~$\map$.
Together with the uniqueness of~$\eta_0$, this implies that the measure~$h_0 \eta_0$ is the unique equilibrium state of~$\map$ for the potential~$\rpot$.

As~$\map$ satisfies the \TCEC, the Lyapunov exponent of~$\rho$ is positive~\cite[Main Theorem]{PrzRivSmi03}, so~$\rho$ admits a generating partition of finite entropy, see for example~\cite{Man83} (where it was assumed that the entropy is positive, but in fact it was only used that the Lyapunov exponent is positive), ~\cite[\S2]{DenUrb91b}, \cite{Dob0804} or~\cite{PUbook}.
This implies that Rokhlin formula holds~\cite[10\S]{Par69}:
$$ h_\rho = \int \log \Jac_\rho d \rho. $$
Using $\Jac_\eta = \frac{h_0 \circ \map}{h_0} \Jac_\rho$, we obtain $ h_\rho = \int \log \Jac_\eta d \rho$, and
\begin{align*}
h_\rho & - \pressure(\map, \rpot) + \int \rpot d \rho
\\ & =
\int \log \left(\Jac_\eta \exp(\rpot - \pressure(\map, \rpot)) \right) d\rho
\\ & \le
\int \Jac_\eta \exp(\rpot - \pressure(\map, \rpot)) d \rho - 1
\\ & =
\exp(- \pressure(\map, \rpot)) \int \sum_{y \in \map^{-1}(x)} \Jac_\rho(y)^{-1} \Jac_\eta(y) \exp(\rpot(y)) d \rho(x) - 1
\\ & =
\exp(- \pressure(\map, \rpot)) \int h_0(x)^{-1} \sum_{y \in \map^{-1}(x)} h_0(y) \exp(\rpot(y)) d \rho(x) - 1
\\ & =
\exp(- \pressure(\map, \rpot)) \int h_0^{-1} \sL_{\rpot} h_0 d \rho - 1
\\ & =
0
\end{align*}
This shows that~$\rho$ is an equilibrium state of~$\map$ for the potential~$\rpot$ if and only if $\Jac_\eta = \exp(\pressure(\map, \rpot) - \varphi)$ holds on a set of full measure with respect to~$\rho$.
Since~$h_0$ takes values on~$[C_0^{-1}, C_0]$, this last condition is equivalent to the condition that~$\Jac_\eta = \exp(\pressure(\map, \rpot))$ holds on a set of full measure with respect to~$\eta$; or equivalently, that~$\eta$ is a $\exp(\pressure(\map, \rpot) - \rpot)$-conformal measure for~$\map$.
\end{proof}

\section{On Kifer's result for semi-flows}\label{s:flows}

In this section we clarify the relation between Theorem~\ref{t:LDP} and  the main result of  \cite{Kif90}  concerning dynamical systems (namely, Theorem 3.4 of that paper). We claim no originality concerning the proofs of Theorem~\ref{t:LDP} and Theorem~\ref{Kifer-theo}, since in both cases the basic ideas are in~\cite{Kif90}.
See~\cite{Ara07} and references therein for large deviation upper-bounds, for some non-uniformly hyperbolic semi\nobreakdash-flows.

Recall that~\cite{Kif90} concerns large deviations  in $\sM(Y)$, where~$Y$ is a  compact metric space that is not necessarily invariant.
We show  how  the large deviation lower bounds in~$\sM(Y)$ can be  recovered, and in fact slightly strengthened (see Remark~\ref{remark-strengthened-Kifer's-theo}), from Theorem~\ref{t:LDP} and Remark~\ref{remark-extension-general-systems}.
In order to get the upper bounds in $\sM(Y)$
we   use  the extension of the variational principe proved in  \cite{Kif90}.
However, if we consider the closure $X$ of the union of the supports of all the invariant probability measures on~$Y$, then $X$ is invariant and the  large deviation principle will be obtained in $\sM(X)$ from  Theorem~\ref{t:LDP}.

The basic ingredients are the following.
Let~$M$ be a locally compact metric space, let~$Y$ be a compact subset of $M$, and let $\mathfrak{T}\in\{
\mathbb{Z}_+,\mathbb{R}_+\}$.
  For each $t\in \mathfrak{T}$  let  $F^t:M\rightarrow M$  be a continuous map, put $Y_t=\{x\in M:F^s(x)\in Y, 0\le s\le t\}$,  $\sM^F_Y=\{\mu\in\sM(Y):F^t[\mu]=\mu, t\ge 0\}$ (i.e. $\sM^F_Y$ is the set of $F^t$-invariant probability measures for all $t\in \mathfrak{T}$), and $X=\overline{\bigcup_{\mu\in\sM^F_Y}\ \textnormal{supp\ }\mu}$.
We shall use the notations of Remark~\ref{remark-extension-general-systems} for the system induced on $X$; more precisely,
let $\tau$ be the action of $\mathfrak{T}$
on $X$ given by $\tau^t=F^t$ for all $t\in \mathfrak{T}$, so that $X$ is
$\tau$-invariant  with
$\sM^{\tau}(X)=\sM^F_Y$.
When  $\sM^F_Y\neq\emptyset$, for each  $\mu\in\sM^F_Y$ let $h^1_{\mu}$ denote the entropy of $F^1$ with respect to $\mu$, and note that
   $h^\tau_{\mu}=h^1_{\mu}$.
For each $\phi\in C(Y)$ let $\tilde{I}^{\phi}$ be the function defined on  $\sM(Y)$ by
$$
\tilde{I}^{\phi}(\mu)
=
\begin{cases}
P^\tau(\phi_{\mid X})-\int \phi d\mu - h^1_{\mu} & \text{if } \mu\in\sM^F_Y;
\\
+\infty & \text{if } \mu \in \sM(Y) \setminus \sM^F_Y.
\end{cases}
$$
Since $\sM^{\tau}(X)=\sM^F_Y$ and $h^\tau_{\mu}=h^1_{\mu}$, by identifying  $\sM(X)$ as a (closed) subset of $\sM(Y)$ we see that
   $\tilde{I}^{\phi}$ coincides on  $\sM(X)$ (and takes infinite value outside) with the function  $I^{\phi_{\mid X}}$  associated to the system $(X,\tau)$ as in Remark~\ref{remark-extension-general-systems}, defined by
   $$
I^{\phi_{\mid X}}(\mu)
=
\begin{cases}
P^\tau(\phi_{\mid X})-\int \phi d\mu - h^\tau_{\mu} & \text{if } \mu\in\sM^\tau(X);
\\
+\infty & \text{if } \mu \in \sM(X) \setminus \sM^\tau(X).
\end{cases}
$$
For each $t\in \mathfrak{T}$  let $\average_t:Y_t\mapsto\sM(Y_t)$ defined by
   $\average_t(x)=\frac{1}{t}\int_0^t \delta_{F^t(x)}ds$ when $\mathfrak{T}=\mathbb{R}_+$,  and $\average_t(x)=\frac{1}{t}\sum_{i=0}^{t-1} \delta_{F^t(x)}$ when  $\mathfrak{T}=\mathbb{Z}_+$.

\begin{theoalph}[Following Kifer]\label{Kifer-theo}
Let  $m\in\sM(Y)$,  let $\phi\in C(Y)$,  and assume that the following conditions hold.
\begin{itemize}
\item[(i)] $\sM^F_Y\neq\emptyset$ and the map $h^1$ on $\sM^F_Y$ is finite and upper semi-continuous;
\item[(ii)] For each  $t\in\mathfrak{T}$, each $x\in X$ and each  $\delta>0$ we have
\[{a_{\delta,t}}^{-1}\le m(U_{\delta,x,t})\exp \left(-t\int_Y\phi d\average_t(x) \right)
\le a_{\delta,t},\]
where
\[U_{\delta,x,t}=\{y\in Y_t: d(F^u (x),F^u (y))\le\delta, 0\le u\le t\}\]
and $a_{\delta,t}$  satisfies
\[\lim_{\delta\rightarrow 0}\lim_{t\rightarrow +\infty} a_{\delta,t}^{1/t}>0.\]
\end{itemize}
The following conclusions hold.
\begin{itemize}
\item [1.] For each closed subset $\sF$ of $\sM(X)$ we have
\[\limsup_{t\rightarrow+\infty}\frac{1}{t}\log m\{x\in X:\average_t(x)\in \sF\}\le-\inf_{\sF} \tilde{I}^\phi.\]
If moreover
\[m(U_{\delta,x,t})\exp \left(-t\int_Y\phi d\average_t(x) \right)
\le a_{\delta,t}\]
for all $t\in\mathfrak{T}$, all $x\in Y_t$ and all $\delta>0$, then we can replace $X$ by $Y$ in the above assertion.
\item [2.] If  there is a  dense vector subspace  $\cW\subset C(X)$ such that for each $\psi\in \cW$ there is a unique measure  $\mu\in\sM(Y)$ realizing the supremum in   $\sup_{\mu\in\sM^F_Y}\{\int (\psi+\phi) d\mu+h^1_{\mu}\}$, then
for each open subset  $\sG$ of  $\sM(Y)$ we have
\begin{multline*}
\liminf_{t\rightarrow+\infty}\frac{1}{t}\log m\{x\in Y:\average_t(x)\in \sG\})
\\ \ge
\liminf_{t\rightarrow+\infty}\frac{1}{t}\log m\{x\in X:\average_t(x)\in \sG\cap\sM(X)\})
\\ \ge
-\inf_{\sG\cap\sM(X)} \tilde{I}^\phi=-\inf_{\sG} \tilde{I}^\phi.
\end{multline*}
\end{itemize}
\end{theoalph}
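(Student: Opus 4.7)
The plan is to reduce the theorem to Theorem~\ref{t:LDP} applied to the compact $\tau$-invariant subsystem $(X,\tau)$, and to fall back on a direct Bowen-ball covering argument combined with the extended variational principle of~\cite{Kif90} only for the strengthened upper bound in $\sM(Y)$. Preliminarily I would record that $X\subset Y$ is closed and $\tau$-invariant with $\sM^\tau(X)=\sM^F_Y$ and $h^\tau_\mu=h^1_\mu$, so $\tilde{I}^\phi$ restricted to $\sM(X)$ coincides with the rate function $I^{\phi_{\mid X}}$ associated to $(X,\tau)$ as in Remark~\ref{remark-extension-general-systems}, while $\tilde{I}^\phi\equiv +\infty$ off $\sM(X)$. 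Hypothesis~(i) supplies exactly the entropy assumption of Theorem~\ref{t:LDP}, and the uniqueness hypothesis on $\cW$ in part~2 is precisely the uniqueness of equilibrium states required by Theorem~\ref{t:LDP}.

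For the upper bound of part~1 restricted to closed $\sF\subset\sM(X)$ and for the lower bound of part~2, I would set $\Omega_t:=m(X)^{-1}\,\average_t[m|_X]$, a Borel probability measure on $\sM(X)$, and verify the functional identity~\eqref{e:functional equality} with $\rpot=\phi_{\mid X}$:
\begin{equation*}
\lim_{t\to\infty}\frac{1}{t}\log\int_{\sM(X)}\exp\Bigl(t\int\psi\,d\mu\Bigr)d\Omega_t(\mu)=P^\tau(\phi_{\mid X}+\psi)-P^\tau(\phi_{\mid X}),\quad\psi\in C(X).
\end{equation*}
This identity comes from~(ii) via a standard $(t,\delta)$-separated-set argument: on each Bowen ball $U_{\delta,x_i,t}$ centered at a maximal $(t,\delta)$-separated $\sN\subset X$ the Birkhoff sum $S_t\psi$ varies by at most $t\omega_\psi(\delta)$, so~(ii) sandwiches $\int_X\exp(S_t\psi)\,dm$ between multiples of $a_{\delta,t}^{\pm 1}\sum_{\sN}\exp(S_t(\phi+\psi)(x_i))$, whose exponential growth rate is $P^\tau(\phi_{\mid X}+\psi)$; the case $\psi=0$ together with $t^{-1}\log m(X)\to 0$ simultaneously constrains the asymptotics of $t^{-1}\log a_{\delta,t}$ and forces the subtraction of $P^\tau(\phi_{\mid X})$. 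Theorem~\ref{t:LDP} then produces a large deviation principle for $(\Omega_t)$ on $\sM(X)$ with rate $I^{\phi_{\mid X}}$, together with the tightness identity on convex open sets. Since $F^s(X)\subset X$ for $s\ge 0$, $\average_t$ sends $X$ into $\sM(X)$, so $m\{x\in X:\average_t(x)\in\sA\}=m(X)\,\Omega_t(\sA\cap\sM(X))$ for every Borel $\sA\subset\sM(Y)$; together with $\tilde{I}^\phi\equiv +\infty$ off $\sM(X)$ this yields both the upper bound of part~1 in $\sM(X)$ and, via the inclusion $m\{x\in Y:\average_t(x)\in\sG\}\ge m\{x\in X:\average_t(x)\in\sG\}$, the lower bound of part~2.

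For the strengthened upper bound of part~1 in $\sM(Y)$, the measures $\Omega_t$ no longer capture mass on $Y_t\setminus X$, so I would argue directly. Given a closed $\sF\subset\sM(Y)$ and $\varepsilon>0$, cover $\{x\in Y_t:\average_t(x)\in\sF\}$ by Bowen balls centered at a maximal $(t,\delta)$-separated $\sN\subset\{x\in Y_t:\average_t(x)\in\sF^\varepsilon\}$ and apply the strengthened form of~(ii) to get
\begin{equation*}
m\{x\in Y:\average_t(x)\in\sF\}\le a_{\delta,t}\sum_{x_i\in\sN}\exp(S_t\phi(x_i)).
\end{equation*}
The extended variational principle of~\cite{Kif90} bounds the limsup of $t^{-1}\log$ of the restricted sum by $\sup\bigl\{h^1_\mu+\int\phi\,d\mu:\mu\in\overline{\sF^\varepsilon}\cap\sM^F_Y\bigr\}$; letting $\varepsilon\downarrow 0$, $\delta\downarrow 0$ and combining with the asymptotics of $t^{-1}\log a_{\delta,t}$ extracted in the previous paragraph produces the desired bound $-\inf_\sF\tilde{I}^\phi$. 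The main obstacle I anticipate is precisely the careful bookkeeping of $a_{\delta,t}$: both the functional identity~\eqref{e:functional equality} for $\Omega_t$ and the closing step of the upper bound rely on matching the two-sided estimate of~(ii) with the $(t,\delta)$-separated-set characterization of $P^\tau(\phi_{\mid X})$, the growth hypothesis $\lim_{\delta\to 0}\lim_t a_{\delta,t}^{1/t}>0$ being exactly what ensures this compatibility.
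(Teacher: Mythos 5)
Your preliminary identifications ($\sM^\tau(X)=\sM^F_Y$, $h^\tau_\mu=h^1_\mu$, $\tilde{I}^\phi=I^{\phi_{\mid X}}$ on $\sM(X)$ and $+\infty$ off it), your derivation of the functional identity from (ii) via maximal $(\delta,t)$-separated sets, the extraction of $P^\tau(\phi_{\mid X})=-\lim_{\delta\to 0}\lim_{t\to+\infty}\tfrac1t\log a_{\delta,t}$ from the case $\psi=0$, and the treatment of the part~2 lower bound all match the paper. The genuine gap is in part~1: you obtain the upper bound in $\sM(X)$ by invoking Theorem~\ref{t:LDP} for the net $\Omega_t=m(X)^{-1}\average_t[m|_X]$, but Theorem~\ref{t:LDP} requires a dense subspace $\cW$ of potentials admitting unique equilibrium states, and that hypothesis appears only in part~2 of the statement; part~1 must hold without it. The upper bound therefore has to come from a result that needs only the functional equality together with compactness and convexity of the limiting functional --- e.g.\ \cite[Theorem~4.5.3]{DemZei98} or \cite[Theorem~5.2]{Com09}, which is what the paper uses --- and as written your argument proves part~1 only under the extra uniqueness hypothesis of part~2. (A minor related point: you use $\tfrac1t\log m(X)\to 0$ before noting that $m(X)>0$, which itself must be deduced from the hypothesis $\lim_\delta\lim_t a_{\delta,t}^{1/t}>0$ via the $\psi=0$ identity.)

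A second gap concerns the strengthened upper bound in $\sM(Y)$. Your covering argument correctly reduces matters to estimating $\limsup_t\tfrac1t\log$ of sums of $\exp\bigl(t\int_Y\phi\,d\average_t(x)\bigr)$ over $(t,\delta)$-separated subsets of $\{x\in Y_t:\average_t(x)\in\sF\}$, but the bound of such restricted sums by $\sup\bigl\{h^1_\mu+\int\phi\,d\mu:\mu\in\overline{\sF^\varepsilon}\cap\sM^F_Y\bigr\}$ is not available as a quotable statement here: over the non-invariant set $Y$ it is essentially the content of Kifer's original argument and would itself need to be proved (limit points of empirical measures of separated orbit segments in $Y_t$ must be shown to lie in $\sM^F_Y$, and the passage $\varepsilon,\delta\to 0$ uses upper semicontinuity of entropy along such limits). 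The paper sidesteps this by bounding the large deviation functional $\overline{L}_Y(\widehat{\psi})\le Q_{\phi_{\mid X}}(\psi_{\mid X})$ for every $\psi\in C(Y)$, using only the one-sided estimate and the equality of pressures $P_Y(\psi)=P^\tau(\psi_{\mid X})$ from \cite[Proposition~3.1]{Kif90}, and then applies the abstract upper bound \cite[Lemma~4.5.3]{DemZei98} in $\smspc$-type generality together with Lemma~\ref{l:LDP} to identify the resulting rate with $\tilde{I}^\phi$ on $\sM(X)$. You should either follow that functional route or supply a proof of the restricted variational bound you invoke.
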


\begin{proof}
Putting for each $\psi\in C(Y)$ and each $\delta>0$,
 \[\gamma_\delta(\psi)=\sup\{|\psi(y)-\psi(z)|:y\in Y, z\in Y, d(y,z)\le \delta\},\]
 we get for
each maximal $(\delta,t)$-separated set $S_{\delta,t}$ in $Y_t$,
\begin{equation}\label{Kifer-theo-eq2}
\frac{1}{t}\log\sum_{x\in S_{\delta,t}\cap X}m(U_{\delta/2,x,t})\exp \left( t\int_X (\psi-\gamma_\delta(\psi))d\average_t(x)\right)
\end{equation}
\[\le\frac{1}{t}\log\int_{X}\exp\left( t\int_X \psi d\average_t(x)\right) dm(x)\]
\[\le\frac{1}{t}\log\sum_{x\in S_{\delta,t}\cap X}m(U_{\delta/2,x,t})\exp \left( t\int_X
   \psi+\gamma_\delta(\psi))d\average_t(x)\right),\]
 and using (ii)   yields
\begin{equation}\label{Kifer-theo-eq4}
\lim_{t\rightarrow+\infty}\frac{1}{t}\log\int_{X}\exp\left(t\int_X\psi d\average_t(x) \right) dm(x)
=P^{\tau}(\phi_{\mid X}+\psi_{\mid X})+\lim_{\delta\rightarrow 0}\lim_{t\rightarrow +\infty}\frac{1}{t}\log a_{\delta,t}
\end{equation}
(note that $S_{\delta,t}\cap X$ is a maximal $(\delta,t)$-separated set in $X$).
 Taking $\psi=0$ in (\ref{Kifer-theo-eq4})  gives
\[
\lim_{t\rightarrow+\infty}\frac{1}{t}\log m(X)=P^{\tau}(\phi_{\mid X})+\lim_{\delta\rightarrow 0}\lim_{t\rightarrow +\infty}\frac{1}{t}\log a_{\delta,t}>-\infty
\]
which implies  $m(X)>0$; in particular, both sides of the above equality vanish hence
\begin{equation}\label{Kifer-theo-eq4.1}
P^{\tau}(\phi_{\mid X})=-\lim_{\delta\rightarrow 0}\lim_{t\rightarrow +\infty}\frac{1}{t}\log a_{\delta,t}.
\end{equation}
 We put $m_X=m/m(X)$,
  and shall consider  the system $(X,\tau)$ and  the net of image
 measures $({{\average_t}}_{\mid X}[m_X])$ on $\sM(X)$.
First note that the hypothesis $(i)$ gives the upper semi-continuity of the map $h^\tau_{\cdot}$.
 From (\ref{Kifer-theo-eq4}) and (\ref{Kifer-theo-eq4.1}) we obtain for each $\psi\in C(Y)$,
\[
\lim_{t\rightarrow+\infty}\frac{1}{t}\log\int_{\sM(X)}\exp \left(t\int_X \psi d\mu\right)d{{\average_t}}_{\mid X}[m_X]=P^{\tau}(\phi_{\mid X}+\psi_{\mid X})-P^{\tau}(\phi_{\mid X}).
\]
Since any element of $C(X)$ is the restriction of some function in $C(Y)$,
it follows that the general hypotheses of \cite[Theorem 5.2]{Com09}
 hold for the net $({{\average_t}}_{\mid X}[m_X])$.
 Therefore,   we get for  each closed subset $\sF$ of $\sM(X)$,
 \[\limsup_{t\rightarrow+\infty}\frac{1}{t}\log{{\average_t}}_{\mid X}[m_X](\sF)=\limsup_{t\rightarrow+\infty}\frac{1}{t}\log m\{x\in X:\average_t(x)\in\sF\}\]
  \[\le-\inf_{\sF} I^{\phi_{\mid X}}=-\inf_{\sF} \tilde{I}^\phi,\]which proves the first assertion of part 1.
 Assume moreover that
 \begin{equation}\label{Kifer-theo-eq5}
 m(U_{\delta,x,t})\exp \left(-t\int_Y\phi d\average_t(x) \right)
\le a_{\delta,t}
\end{equation}
for all  $t\in\mathfrak{T}$, all $x\in Y_t$ and all $\delta>0$.
 For each $t\in \mathfrak{T}$, let $m_t$ be the measure defined on $Y_t$ by putting $m_t=m/m(Y_t)$,
    and
   let $\overline{L}_Y$  be the large deviation functional associated to the net $(\average_t[m_t])$  (seen as acting on  $\widetilde{\sM}(Y)$). Replacing   $S_{\delta,t}\cap X$ (resp. $X$) by $S_{\delta,t}$ (resp. $Y$)  in  the sums appearing in  (\ref{Kifer-theo-eq2}), and  using (\ref{Kifer-theo-eq5})  together with  the fact that
   the topological pressure of any $\psi\in C(Y)$ coincides with $P^{\tau}({\psi}_{\mid X})$ (\cite[Proposition 3.1]{Kif90})
      we get
 \[
 \overline{L}_Y(\widehat{\psi})=\limsup_{t\rightarrow+\infty}
 \frac{1}{t}\log\int_{Y_t}\exp\left(t\int_Y\psi d\average_t(x)\right)dm(x)
\]
\[\le P^{\tau}(\phi_{\mid X}+\psi_{\mid X})-P^{\tau}(\phi_{\mid X})=Q_{\phi_{\mid X}}(\psi_{\mid X}),
\]
where $Q_{\phi_{\mid X}}$ is the map defined as in Lemma~\ref{l:LDP}; moreover, by (\ref{Kifer-theo-eq4}) and (\ref{Kifer-theo-eq4.1})
  the upper limit is a limit and the inequality is an equality, hence for each $\psi\in C(Y)$,
  \[{L}_Y(\widehat{\psi})=Q_{\phi_{\mid X}}(\psi_{\mid X}).\]
By \cite[Lemma 4.5.3]{DemZei98},  $(\average_t[m_t])$ satisfies the large deviation upper  bounds in $\widetilde{\sM}(Y)$ with the function
\begin{equation}\label{Kifer-theo-eq8}
{L_Y}^*(\mu)=\sup\{\mu(\psi)-L_Y(\widehat{\psi}):\psi\in C(Y)\}=\sup\{\mu(\psi)-Q_{\phi_{\mid X}}(\psi_{\mid X}):\psi\in C(Y)\}
\end{equation}
\[\ge\sup\{\mu_{\mid X}(\psi')-Q_{\phi_{\mid X}}(\psi'):\psi'\in C(X)\}=Q_{\phi_{\mid X}}^*(\mu_{\mid X}).\]
Since  $\sM(Y)$ is closed in $\widetilde{\sM}(Y)$, the large deviation principle holds  in   $\sM(Y)$ with rate function  ${{L_Y}^*}_{\mid \sM(Y)}$.
Since the inequality in (\ref{Kifer-theo-eq8}) is an equality  when $\mu\in\sM(X)$,
   we obtain ${{L_Y}^*}_{\mid \sM(Y)}=\tilde{I}^{\phi}$  by Lemma~\ref{l:LDP}; this proves the last assertion of part 1.
The hypothesis in  part 2   is equivalent to the one of  Theorem~\ref{t:LDP} (strictly speaking, of its analogue given by Remark~\ref{remark-extension-general-systems})  by taking $\varphi=\phi_{\mid X}$. Consequently, we have  for  each open subset $\sG'$ of $\sM(X)$,
 \[\liminf_{t\rightarrow+\infty}\frac{1}{t}\log{{\average_t}}_{\mid X}[m_X](\sG')=\liminf_{t\rightarrow+\infty}\frac{1}{t}\log m\{x\in X:\average_t(x)\in\sG'\}\]
 \[\ge -\inf_{\sG'} I^{\phi_{\mid X}}=-\inf_{\sG'}\tilde{I}^\phi,\] which proves  the assertion of part 2 concerning~$\sM(X)$.
The assertion concerning~$\sM(Y)$ follows by noting that
\begin{multline*}
m \{x \in Y:\average_t(x)\in\sG\}
\\ =
m\{x\in X:\average_t(x)\in\sG\cap\sM(X)\}
+ m \{x \in Y \setminus X : \average_t(x)\in\sG\}
\end{multline*}
for all open subsets $\sG$ of $\sM(Y)$, and using the above lower bounds.
\end{proof}

\begin{rema}\label{remark-strengthened-Kifer's-theo}
We explain here what  improvements    Theorem~\ref{Kifer-theo} brings with respect to
  the original version of  \cite[Theorem 3.4]{Kif90}.
  \begin{itemize}
  \item The latter treats the case where $P^\tau(\phi_{\mid X})=0$; this follows from the relation   \[P^{\tau}(\phi_{\mid X})=-\lim_{\delta\rightarrow 0}\lim_{t\rightarrow +\infty}\frac{1}{t}\log a_{\delta,t}\]as shows (\ref{Kifer-theo-eq4.1}), and   the general assumption there    which requires that for each~$\delta > 0$,
  \begin{equation}\label{remark-strengthened-Kifer's-theo-eq0}
\lim_{t\rightarrow+\infty} \frac{1}{t}\log a_{\delta,t}=0.
  \end{equation}
   \item  We do not require  that  $\textnormal{supp\ }m=Y$; in fact,   we only need that $m(X)>0$  in order to have the lower bounds  in $\sM(X)$, and that is ensured by the hypotheses.
  \item The hypothesis in part 1 of Theorem~\ref{Kifer-theo} in order to have the upper bounds in $\sM(Y)$ is   weaker than the one of \cite[Theorem 3.4]{Kif90}, where it is required  that (ii) holds for all $t\in\mathfrak{T}$, all $x\in Y_t$ and all $\delta>0$, and that moreover~\eqref{remark-strengthened-Kifer's-theo-eq0} holds for all~$\delta > 0$.
   \item The hypothesis in part 2 to get the lower bounds  in $\sM(X)$ is weaker than the one of \cite[Theorem 3.4]{Kif90} since this latter requires the existence of a dense vector subspace of $C(Y)$; furthermore, these bounds are stronger than the ones in $\sM(Y)$.
 \end{itemize}
\end{rema}

\begin{rema}\label{remark-extension-general-systems}
For each integer $d\ge 1$ we put  $\mathbb{Z}^{d}_+=\{x\in\mathbb{Z}^d: x_i\ge 0, 1\le i\le d\}$,  and let $\tau$ be a continuous
 representation  of the semi-group $\mathfrak{T}\in\{
\mathbb{Z}^{d}_+,\mathbb{R}_+\}$ (resp. group $\mathfrak{T}=\mathbb{Z}^{d}$) in the semi-group of continuous endomorphisms
 (resp. group of homeomorphisms) of  $X$, let
 $\sM^{\tau}(X)$,
 $h^{\tau}_{\cdot}$,
$P^{\tau}(\cdot)$ be  the  obvious analogues of
  $\sM(X,T)$,
$h_{\cdot}(T)$, $P(T,\cdot)$, respectively,   and assume that
 $h^{\tau}$ is finite and upper semi-continuous (when $\mathfrak{T}$ is continuous,   $h^{\tau}_{\cdot}$ and $P^{\tau}(\cdot)$ are taken as  the entropy and pressure of  the time-one map, respectively). Let $(\Omega_\alpha)_{\alpha\in\wp}$ be a net of Borel probability measures on $\sM(X)$ (in place of $( \Omega_n)_{n \ge 1}$), and  let $(t_\alpha)_{\alpha\in\wp}$ be a net in $(0,+\infty)$ converging to $0$ (in place of $(1/n)_{n \ge 1}$). It is then straightforward to verify that the statement  as well as  the proof  of Theorem~\ref{t:LDP}  work verbatim with the above changes (although the proof refers  to some  results of \cite{DemZei98} which are stated  for nets indexed by positive reals, these results remain valid for general nets).
 Indeed, Lemma~\ref{l:LDP} remains true by the variational principle relating $P^\tau$ y $h^\tau$,    the others  required ingredients   are given by  the  functional equality (\ref{e:functional equality}) and the hypothesis on $\cW$, so that we just have to change the symbols in the proof.
\end{rema}

\begin{rema}\label{remark-Kifer's-theo-invariant-case}
When $Y$ is $F^t$-invariant for all $t\in \mathfrak{T}$, then $Y=X$ and the proof of Theorem~\ref{Kifer-theo} reveals  that condition (ii)  ensures that the equality (\ref{e:functional equality}) of   Theorem~\ref{t:LDP} holds (more exactly, of its extension  given by Remark~\ref{remark-extension-general-systems}); the second hypothesis of part 2 of Theorem~\ref{Kifer-theo}  is  equivalent to the hypothesis on $\cW$ of  Theorem~\ref{t:LDP}.
 Consequently, all the conclusions of Theorem~\ref{Kifer-theo} follows from the general version of Theorem~\ref{t:LDP} given by Remark~\ref{remark-extension-general-systems}.
\end{rema}

\bibliographystyle{alpha}

\end{document}